\theoremstyle{definition}
  \newtheorem{defn}{Definition}[section]
\theoremstyle{plain}
  \newtheorem{theorem}[defn]{Theorem}
  \newtheorem{proposition}[defn]{Proposition}
  \newtheorem{corollary}[defn]{Corollary}
  \newtheorem{lemma}[defn]{Lemma}
\DeclareMathOperator{\Aut}{Aut}
\DeclareMathOperator{\Con}{Con}
\DeclareMathOperator{\End}{End}
\DeclareMathOperator{\Hom}{Hom}
\DeclareMathOperator{\id}{id}
\DeclareMathOperator{\Mat}{Mat}
\DeclareMathOperator{\Res}{Res}
\newcommand{\bA}{\mathbf{A}}
\newcommand{\bB}{\mathbf{B}}
\newcommand{\bL}{\mathbf{L}}
\newcommand{\bK}{\mathbf{K}}
\newcommand{\bM}{\mathbf{M}}
\newcommand{\bN}{\mathbf{N}}
\newcommand{\bP}{\mathbf{P}}
\newcommand{\bR}{\mathbf{R}}
\renewcommand*{\@fnsymbol}[1]{}
\begin{document}

\title{Invertible Matrices over Finite Additively Idempotent Semirings%
  \thanks{This work has been supported by Science Foundation Ireland under
    grant no.\ 08/IN.1/I1950.}}

\date{}

\author{%
  Andreas Kendziorra\\
 \begin{small}Claude Shannon Institute\end{small}\\
  \begin{small}University College Dublin\end{small}
  \and
  Stefan E.\ Schmidt\\
  \begin{small}Institut f\"ur Algebra\end{small}\\
  \begin{small}Technische Universit\"at Dresden\end{small}
  \and
  Jens Zumbr\"agel\\	
  \begin{small}Claude Shannon Institute\end{small}\\
  \begin{small}University College Dublin\end{small}}

\maketitle

\begin{abstract}
  We investigate invertible matrices over finite additively idempotent
  semirings.  The main result provides a criterion for the
  invertibility of such matrices.  We also give a construction of the
  inverse matrix and a formula for the number of invertible
  matrices.\medskip

  \noindent\textit{Keywords:} Matrix inversion, Semirings, Lattices.\medskip

  \noindent\textit{2010 Mathematics Subject Classification:} 15A09,
  15B99, 16Y60, 06B10.
\end{abstract}

\section{Introduction}

Monico, Maze, and Rosenthal generalized in~\cite{MMR} the
Diffie-Hellman protocol, which is used in public-key cryptography, by
using arbitrary semigroup actions instead of the group exponentiation.
Some of the proposed actions involve matrices over proper finite
simple semirings with zero.  Monico showed in~\cite{monico} that these
semirings are additively idempotent and Zumbr\"agel presented
in~\cite{zum} a characterization of these semirings, which can be
formulated by residuated mappings of finite lattices.  When matrices
are used for cryptographic purposes, the principal questions arise how
to easily decide whether a matrix is invertible and, if so, how to
compute the inverse matrix. For matrices over fields the answers are
well-known: A matrix over a field is invertible iff its determinant is
nonzero, and the inverse of an invertible matrix can be computed, e.g.,
with the help of Gauss-Jordan elimination.  A similar useful criterion
for invertible matrices over arbitrary semirings is not known in
general.  There are results for invertible matrices over boolean
algebras \cite{luce, rutherford, wedderburn}. Furthermore, there exist
generalizations to matrices over certain ordered algebraic structures
\cite{blyth}, and there are results for matrices over Brouwerian
lattices \cite{zhao} and distributive lattices \cite{giveon}. Also for
matrices over certain commutative semirings some results are known
\cite{dolzanoblak, tan}.

In this paper we present a criterion for invertible matrices over
finite additively idempotent semirings with zero and one.  As an
important consequence, for a finite additively idempotent base
semiring with irreducible additive semigroup we get that a matrix is
invertible iff it is a generalized permutation matrix.  Besides the
criterion, we present a construction for the inverse of an invertible
matrix and a formula for the number of invertible matrices of a given
size over a given semiring.  For these results we represent a finite
additively idempotent semiring with zero and one as a semiring of
residuated mappings of a finite lattice.  The invertibility criterion
is then based on a description of automorphisms of lattices.  The
results cover the case of invertible matrices over proper finite
simple semirings with zero, which are used in~\cite{MMR}.

\section{Matrices over additively idempotent semirings\!}

\begin{defn}
  Let $R$ be a nonempty set and $+$ and $\cdot$ two binary operations
  on $R$.  Then $(R,+,\cdot)$ is called a \textit{semiring} if $(R,+)$
  is a commutative semigroup, $(R,\cdot)$ is a semigroup and the
  distributive laws $r\cdot(s+t)=r\cdot s+r\cdot t$ and $(r+s)\cdot
  t=r\cdot t+ s\cdot t$ for all $r,s,t\in R$ hold.  If a neutral
  element $0$ of the semigroup $(R,+)$ exists and it satisfies $0\cdot
  x=x\cdot 0=0$ for all $x\in R$, then it is called a \textit{zero}.
  If a neutral element $1$ of the semigroup $(R,\cdot)$ exists, then
  it is called a \textit{one}.  A semiring is called a \textit{proper}
  semiring if it is not a ring, i.e., $(R,+)$ is not a group.
\end{defn}

For invertible matrices over semirings we clearly have just to
consider semirings with zero and one.

A \textit{lattice} $\bL=(L,\leq)$ is an ordered set where for every
two elements $x,y\in L$ the supremum $x\vee y$ and the infimum
$x\wedge y$ in $L$ exists. $\bL$ is called \textit{complete} if for
every subset $X\subseteq L$ the supremum $\bigvee X$ and the infimum
$\bigwedge X$ in $L$ exists. A complete lattice has a greatest element
$1_\bL$ and a least element $0_\bL$.

There exists an equivalent definition of lattices as algebras: A
\textit{lattice} is an algebra $(L,\vee,\wedge)$, where $L$ is a
nonempty set, and $\vee$ and $\wedge$ are binary, associative,
commutative operations on $L$, which fulfill the absorption laws
${x\vee(x\wedge y)=x}$ and ${x\wedge(x\vee y)=x}$ for every $x,y\in
L$.  That these two definitions are equivalent can be found in
\cite{graetzer}.

If $\bL$ and $\bK$ are complete lattices and a mapping $f:L\rightarrow
K$ fulfills $f(\bigvee X)=\bigvee f(X)$ for every subset $X\subseteq
L$, then $f$ is called \textit{residuated} (residuated mappings are
usually defined more generally for arbitrary ordered sets, but for
complete lattices this definition is sufficient; see \cite{janowitz}).
If $\bL$ and $\bK$ are finite then $f:L\rightarrow K$ is residuated
iff $f(x\vee y)=f(x)\vee f(y)$ for every $x,y\in L$ and
$f(0_\bL)=0_\bK$.  By $\Res(\bL)$ we denote the set of all residuated
mappings from $\bL$ to $\bL$. The structure $(\Res(\bL),\vee,\circ)$,
where $\vee$ denotes the pointwise supremum and $\circ$ the
composition of two mappings, is a semiring. Further the mapping
$\mathbf{0}:L\rightarrow L,\ x\mapsto 0_\bL$, is a zero and $\id_L$ a
one of this semiring. More information about lattices can be found in
\cite{birkhoff,graetzer} and about residuated mappings in
\cite{janowitz}.

If $(R,+)$ is a commutative idempotent semigroup, then $(R,\leq)$ with
${x\leq y}\,:\Leftrightarrow\,{x+y=y}$ is a semilattice with the
supremum operation $\vee=+$. If $(R,+)$ is further finite and has a
neutral element then $(R,\leq)$ is even a lattice (see
\cite{birkhoff}). Hence, if $(R,+,\cdot)$ is a finite additively
idempotent semiring with zero, then $(R,\leq)$ is a lattice. The next
proposition shows that one can embed such a semiring into a semiring
of residuated mappings if it has additionally a one.

\begin{proposition}\label{prop_add_idemp_semirings}
  Let $(R,+,\cdot)$ be a finite additively idempotent semiring with
  zero and one, $\bR:=(R,\leq)$ and
  \begin{displaymath}
    T: R\rightarrow \Res(\bR) \:, \quad r\mapsto T_r
    \quad \text{with}\quad T_r: x\mapsto rx \:.
  \end{displaymath}
  Then $(R,+,\cdot)$ is isomorphic to the subsemiring
  $(T(R),\vee,\circ)$ of $(\Res(\bR),\vee,\circ)$.
\end{proposition}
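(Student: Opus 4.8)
The plan is to show that $T$ is a well-defined, injective homomorphism of semirings onto its image $T(R)$. Since $T$ is trivially surjective onto $T(R)$, this yields the asserted isomorphism and at the same time exhibits $T(R)$ as a subsemiring of $\Res(\bR)$, because closure under $\vee$ and $\circ$ will drop out of the homomorphism identities.

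First I would check that each $T_r$ actually lies in $\Res(\bR)$. The point here is to read the lattice $\bR=(R,\leq)$ correctly in terms of the semiring operations: by construction of $\leq$ the join is $\vee=+$, and the least element $0_\bR$ coincides with the semiring zero $0$, since $0+x=x$ forces $0\leq x$ for all $x$. Using the finiteness criterion for residuated maps recalled above, it then suffices to verify $T_r(x\vee y)=T_r(x)\vee T_r(y)$ and $T_r(0_\bR)=0_\bR$; the former is precisely left distributivity $r(x+y)=rx+ry$, and the latter is the zero axiom $r\cdot 0=0$. Hence $T$ is well defined.

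Next I would verify the homomorphism identities. Additivity $T_{r+s}=T_r\vee T_s$, with $\vee$ the pointwise supremum in $\Res(\bR)$, unwinds after evaluation at an arbitrary $x$ to right distributivity $(r+s)x=rx+sx$; multiplicativity $T_{rs}=T_r\circ T_s$ unwinds to associativity $(rs)x=r(sx)$. One may also note $T_0=\mathbf{0}$ and $T_1=\id_R$, so that the distinguished constants are preserved. For injectivity I would use the one: if $T_r=T_s$, then evaluating at $1$ gives $r=r\cdot 1=T_r(1)=T_s(1)=s\cdot 1=s$. Thus $T\colon R\to T(R)$ is a bijective semiring homomorphism, i.e.\ an isomorphism.

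None of these steps is genuinely difficult; the only thing requiring care is the bookkeeping in the first step, namely making sure that ``residuated'' and ``pointwise supremum'' are interpreted correctly through the identifications $\vee=+$ and $0_\bR=0$. Accordingly I would expect that translation between the lattice-theoretic and the semiring language — rather than any computation — to be the only mild obstacle, and I would also flag that injectivity is exactly the place where the existence of a one is needed.
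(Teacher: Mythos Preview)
Your proposal is correct and follows essentially the same approach as the paper: verify $T_r\in\Res(\bR)$, check that $T$ is a semiring homomorphism, and use evaluation at $1$ for injectivity. The paper's proof simply compresses your first two steps into the word ``clearly'' and spells out only the injectivity argument, which is identical to yours.
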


\begin{proof}
  Clearly, $T_r\in \Res(\bR)$ for every $r\in R$ and $T$ is a semiring
  homomorphism between $(R,+,\cdot)$ and $(\Res(\bR),\vee,\circ)$.
  Since $(R,+,\cdot)$ has a one $1$, we have that $T_r=T_s$ implies
  $r=T_r(1)=T_s(1)=s$ for all $r,s\in R$, i.e., $T$ is injective.
  Hence, $(R,+,\cdot)$ is isomorphic to the subsemiring
  $(T(R),\vee,\circ)$ of $(\Res(\bR),\vee,\circ)$.
\end{proof}

Next we present the characterization of proper finite simple semirings
with zero by Zumbr\"agel \cite{zum}, which was in combination with
\cite{MMR} the motivation for this work.

\begin{defn}
  Let $\bA=(A,F)$ be an algebra. A \textit{congruence} on $\bA$ is an
  equivalence relation $\theta$ on $A$ with the following property:
  For every $n\in \mathbb{N}$ and every $n$-ary mapping $f\in F$ and
  elements $a_i,b_i\in A$ with $a_i\theta b_i$ for $1\leq i\leq n$, it
  holds that $f(a_1,...,a_n)\theta f(b_1,...,b_n)$.
\end{defn}

For a homomorphism $f:\bA\rightarrow \bB$ of some algebras $\bA, \bB$
of the same type the \textit{kernel} $\ker(f):=\{(a,a')\in A\times
A\mid f(a)=f(a')\}$ of $f$ is a congruence on~$\bA$. Also the
\textit{equality relation} $\Delta_A:=\{(a,a) \mid a\in A\}$ and the
\textit{complete relation} $\nabla_A:=A\times A$ are congruences on
$\bA$.  Congruences are one of our main tools to derive our
results. For a wider background on congruences and universal algebra
see \cite{graetzer_UA}.

\begin{defn}
  A semiring $(R,+,\cdot)$ is called \textit{simple} if its only
  congruences are $\Delta_R$ and $\nabla_R$.
\end{defn}

For a complete lattice $\bL=(L,\leq)$ and $a,b\in L$ define a mapping
$e_{a,b}\in\Res(\bL)$ by
\begin{displaymath}
  e_{a,b}:L\rightarrow L \:, \quad x\mapsto \begin{cases}
    0_\bL\quad &\text{if }x\leq a,\\
    b &\text{otherwise}.
  \end{cases}
\end{displaymath}
The main result from \cite{zum} can be stated as follows:

\begin{theorem}\label{theorem_zum}
  Let $\bL$ be a finite lattice and $(R,\vee, \circ)$ a subsemiring of
  $(\Res(\bL),\vee, \circ)$ such that $e_{a,b}\in R$ for every $a,b\in
  L$. Then $(R,\vee,\circ)$ is a proper finite simple semiring with
  zero.  Conversely, every proper finite simple semiring $(S,+,\cdot)$
  with $|S|>2$ and a zero is isomorphic to such a semiring.
\end{theorem}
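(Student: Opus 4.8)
I would establish the two implications separately.

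\emph{The first implication.} Let $R$ be a subsemiring of $(\Res(\bL),\vee,\circ)$ with $e_{a,b}\in R$ for all $a,b\in L$; I may assume $|L|\ge 2$, since for $|L|=1$ the semiring $R$ is trivial and hence a ring. Finiteness of $R$ is immediate from $R\subseteq\Res(\bL)\subseteq L^L$. The constant map $\mathbf{0}=e_{1_\bL,0_\bL}$ lies in $R$ and is a zero of $(\Res(\bL),\vee,\circ)$, hence of $R$. Since $(R,\vee)$ is an idempotent commutative semigroup with more than one element (indeed $\mathbf{0}\ne e_{0_\bL,1_\bL}$), it is not a group, so $R$ is proper. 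The substance is simplicity, and here the plan rests on two identities, each checked by a one-line pointwise evaluation: $h\circ e_{p,q}=e_{p,h(q)}$ for every $h\in\Res(\bL)$, and $e_{p,q}\circ e_{s,u}$ equals $e_{s,q}$ if $u\not\le p$ and $\mathbf{0}$ if $u\le p$.

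Granting these, let $\theta$ be a congruence on $R$ with $\theta\ne\Delta_R$; the goal is $\theta=\nabla_R$. Choose $f\mathrel{\theta}g$ with $f\ne g$, pick $t\in L$ with $f(t)\ne g(t)$, and after possibly interchanging $f$ and $g$ assume $g(t)\not\le f(t)$. Putting $p:=f(t)$, the identities give, for all $s,q\in L$,
\[
  e_{p,q}\circ f\circ e_{s,t}=e_{p,q}\circ e_{s,f(t)}=\mathbf{0},\qquad
  e_{p,q}\circ g\circ e_{s,t}=e_{p,q}\circ e_{s,g(t)}=e_{s,q},
\]
so compatibility of $\theta$ with $\circ$ forces $\mathbf{0}\mathrel{\theta}e_{s,q}$ for all $s,q\in L$. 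Since in addition $h\vee e_{0_\bL,1_\bL}=e_{0_\bL,1_\bL}$ for every $h\in\Res(\bL)$ (pointwise again), one gets $h=h\vee\mathbf{0}\mathrel{\theta}h\vee e_{0_\bL,1_\bL}=e_{0_\bL,1_\bL}\mathrel{\theta}\mathbf{0}$ for all $h\in R$. Hence every element of $R$ is $\theta$-equivalent to $\mathbf{0}$, i.e.\ $\theta=\nabla_R$, so $R$ is simple.

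\emph{The converse.} Let $(S,+,\cdot)$ be proper, finite, simple, with zero, and $|S|>2$. First I would prove $S$ additively idempotent; this is Monico's theorem \cite{monico}, but can be seen directly: choose $n\ge 1$ with $nx$ additively idempotent for all $x$ (possible as $(S,+)$ is finite), verify that $x\mathrel{\sigma}y:\Leftrightarrow nx=ny$ is a semiring congruence, exclude $\sigma=\nabla_S$ (else $nx=n0=0$ for all $x$, so $(S,+)$ is a group and $S$ a ring), so $\sigma=\Delta_S$, and then $n(2x)=2(nx)=nx$ gives $2x\mathrel{\sigma}x$, i.e.\ $2x=x$. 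Thus $\mathbf{S}:=(S,\le)$, with $x\le y:\Leftrightarrow x+y=y$, is a finite lattice with $\vee=+$. Next, the left translations $T_r:x\mapsto rx$ belong to $\Res(\mathbf{S})$ and $T:S\to\Res(\mathbf{S})$, $r\mapsto T_r$, is a semiring homomorphism; its kernel is a congruence and cannot be $\nabla_S$, for that would force $rx=0$ for all $r,x$, making every congruence of the semilattice $(S,+)$ a congruence of $S$ — impossible when $|S|>2$, as a semilattice with at least three elements admits a congruence distinct from $\Delta$ and $\nabla$ (collapse a coatom with the top). Hence $T$ is injective and $S\cong T(S)\le\Res(\mathbf{S})$.

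The remaining and hardest step is to pass to the correct lattice. One cannot take $\bL=\mathbf{S}$, since $T(S)$ need not contain every $e^{\mathbf{S}}_{a,b}$ (this already fails for $S=\Res(\mathbf{C}_3)$ with $\mathbf{C}_3$ the three-element chain). Such a sublattice $\bL$ of $\mathbf{S}$ is nonetheless the only possibility, because any $R$ as in the theorem contains $\{e^{\bL}_{0_\bL,b}:b\in L\}$ as a sublattice isomorphic to $\bL$ while $(R,\le)\cong\mathbf{S}$; concretely $\bL$ can be taken, for instance, as the image of a suitable idempotent element of $\Res(\mathbf{S})$ attached to the multiplication of $S$, and corestricting the action of $S$ to $\bL$ yields an embedding $S\cong R\le\Res(\bL)$. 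The main obstacle is then to show that this $R$ contains $e^{\bL}_{a,b}$ for all $a,b\in L$: establishing this density is where simplicity of $S$ is invoked a second time, to produce enough maps $e^{\bL}_{a,b}$ inside $R$. By the first implication any such $R$ is automatically proper, finite, simple, with zero, consistently with $R\cong S$; so the whole content of the converse is the construction of the lattice $\bL$ for which the corestricted embedding is dense.
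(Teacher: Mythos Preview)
The paper does not prove Theorem~\ref{theorem_zum}; it is quoted verbatim as the main result of~\cite{zum}. So there is no in-paper argument to compare against, and your proposal has to be judged on its own merits.

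Your first implication is correct and complete. The two identities $h\circ e_{p,q}=e_{p,h(q)}$ and $e_{p,q}\circ e_{s,u}\in\{\mathbf{0},e_{s,q}\}$ are checked pointwise as you say, and the deduction that any non-diagonal congruence collapses $\mathbf{0}$ with every $e_{s,q}$ and then with every $h\in R$ via $h\vee e_{0_\bL,1_\bL}=e_{0_\bL,1_\bL}$ is clean. Your side remark that the statement tacitly needs $|L|\ge 2$ for ``proper'' is also right.

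The converse, however, has a genuine gap. The reduction to additive idempotence (your version of Monico's argument) is fine, and so is the observation that the left-regular embedding $T:S\hookrightarrow\Res(\mathbf{S})$ is injective by simplicity but cannot in general serve as the desired $R$ because $T(S)$ need not contain all $e^{\mathbf{S}}_{a,b}$. From that point on, though, you only \emph{name} the remaining work. Saying that $\bL$ is ``the image of a suitable idempotent element of $\Res(\mathbf{S})$ attached to the multiplication of $S$'' does not specify which idempotent, nor why corestriction to its image is still an embedding of $S$; and you explicitly flag the density statement (that every $e^{\bL}_{a,b}$ lies in the image) as ``the main obstacle'' without attempting it. These two steps---the concrete construction of $\bL$ from the internal structure of $S$, and the second use of simplicity to force density of the $e^{\bL}_{a,b}$---are exactly the substance of Zumbr\"agel's classification in~\cite{zum}. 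As written, your converse is an accurate outline of what has to be done, but not a proof.
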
 

For two monoids $\bM$ and $\bN$ we denote by $\Hom(\bM,\bN)$ the set
of all monoid homomorphism from $\bM$ to $\bN$ and by $\End(\bM)$ the
set of all endomorphisms of~$\bM$.  Let $I$ be a finite index set and
$\bM_i$ commutative monoids for every $i\in I$.  It is easy to see
that the mapping
\begin{align*}
  \Omega: \bigtimes_{(i,j)\in I\times I}\!\! \Hom (\bM_j,\bM_i) 
  \rightarrow \End \Big( \bigtimes_{i\in I} \bM_i \Big) \:, \quad
  (f_{i,j})\mapsto \Big(\sum_{j\in I} f_{i,j} \Big)_{i\in I}
\end{align*}
with 
\begin{displaymath}
 \Big(\sum_{j\in I} f_{i,j} \Big)_{i\in I} \big((m_j)_{j\in I}\big) = 
 \Big(\sum_{j\in I} f_{i,j} (m_j)\Big)_{i\in I}
\end{displaymath}
for every $(m_j)_{j\in I}\in \bigtimes_{j\in I}\bM_j$ is an
isomorphism between the semirings
\begin{displaymath}
  \Big( \bigtimes_{(i,j)\in I\times I}\!\! \Hom (\bM_j,\bM_i), +, \cdot \Big)
  \quad\text{ and }\quad 
  \Big( \End \big( \bigtimes_{i\in I} \bM_i \big), +, \circ \Big) \:,
\end{displaymath}
where $+$ denotes in each case the pointwise sum, $\circ$ on
$\End(\bigtimes_{i\in I}\bM_i)$ the composition, and $\cdot$ is
defined on $\bigtimes_{(i,j)\in I\times I} \Hom (\bM_j,\bM_i)$ by
$(f_{i,j})\cdot (g_{i,j}) =: (h_{i,j})$ with $h_{i,j}=\sum_{k\in
  I}f_{i,k}\circ g_{k,j}$.  In particular, it holds that
\begin{align*}
  \big( \Mat_{I\times I}(\End(\bM)),+,\cdot \big) :=
  \Big( \bigtimes_{(i,j)\in I\times I}\!\! \End(\bM), +, \cdot \Big) \cong 
  \big( \End(\bM^I), +, \circ \big) \:,
\end{align*}
where $\bM^I:=\bigtimes_{i\in I}\bM$.

If $\bL$ and $\bK$ are finite lattices and $f:L\rightarrow K$ is a
mapping, then $f$ is residuated iff $f$ is a monoid homomorphism
between the monoids $(L,\vee,0_\bL)$ and $(K,\vee,0_\bK)$.  Hence, we
get
\begin{displaymath}
  \big( \Mat_{I\times I}(\Res(\bL)), +, \cdot \big) \cong
  \big( \Res(\bL^I), \vee, \circ \big) \:.
\end{displaymath}
Therefore, a matrix $M=(m_{i,j})\in \Mat_{I\times I}(\Res(\bL))$ is
invertible iff the corresponding residuated mapping
\begin{displaymath}
 \varphi_M:=\big(\bigvee_{j\in I}m_{i,j}\big)_{i \in I}\in
\Res(\bL^I)
\end{displaymath}
is invertible, which is equivalent to $\varphi_M$ being bijective.

A mapping $f:P\rightarrow Q$ between ordered sets $(P,\leq)$ and
$(Q,\leq)$ is an \textit{(order) isomorphism} if $f$ is surjective and
it fulfills $x\leq y\Leftrightarrow f(x)\leq f(y)$ for all $x,y\in
P$. Note that an order isomorphism is automatically injective. An
order isomorphism from $(P,\leq)$ to ($P,\leq)$ is called
\textit{(order) automorphism}.  Note in the following that the
concepts of isomorphisms and automorphisms of lattices as ordered sets
and as algebras are equivalent (see \cite{graetzer}).

\begin{lemma}
  Let $\bL$ be a complete lattice and $f\in \Res(\bL)$. Then $f$ is an
  automorphism of $\bL$ iff $f$ is bijective.
\end{lemma}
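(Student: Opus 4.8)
The plan is to prove the two implications separately. The forward direction is immediate: by definition an automorphism of $\bL$ is a surjective order-isomorphism, and, as already noted in the text, an order isomorphism is automatically injective; hence an automorphism is in particular bijective.

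For the converse, suppose $f\in\Res(\bL)$ is bijective; the task is to show $x\leq y\Leftrightarrow f(x)\leq f(y)$ for all $x,y\in L$. First I would record that every residuated map is monotone and preserves binary suprema: since $f(\bigvee X)=\bigvee f(X)$ for all $X\subseteq L$, taking $X=\{x,y\}$ yields $f(x\vee y)=f(x)\vee f(y)$. Consequently, if $x\leq y$, i.e.\ $x\vee y=y$, then $f(y)=f(x\vee y)=f(x)\vee f(y)$, so $f(x)\leq f(y)$. This establishes one direction of the equivalence using only that $f$ is residuated.

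The reverse implication is the step where bijectivity is actually used. Assume $f(x)\leq f(y)$, so that $f(x)\vee f(y)=f(y)$. Then $f(x\vee y)=f(x)\vee f(y)=f(y)$, and injectivity of $f$ forces $x\vee y=y$, that is $x\leq y$. Combining the two directions we get $x\leq y\Leftrightarrow f(x)\leq f(y)$, and since $f$ is also surjective, $f$ is by definition an order isomorphism from $\bL$ to $\bL$, i.e.\ an automorphism of $\bL$.

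I do not anticipate a genuine obstacle here; the only point that requires a hypothesis stronger than ``residuated'' is the cancellation $f(x\vee y)=f(y)\Rightarrow x\vee y=y$, which is exactly where injectivity is needed, while surjectivity enters only to satisfy the ``onto'' requirement in the definition of order isomorphism.
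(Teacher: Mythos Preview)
Your proof is correct and follows exactly the same approach as the paper. The paper compresses your argument into the single chain of equivalences $x\leq y\Leftrightarrow y=x\vee y\Leftrightarrow f(y)=f(x\vee y)=f(x)\vee f(y)\Leftrightarrow f(x)\leq f(y)$, where the middle biconditional uses residuatedness in one direction and injectivity in the other, precisely as you spelled out.
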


\begin{proof}
  Let $f$ be bijective.  For $x,y\in L$, the equivalence $x\leq
  y\Leftrightarrow y=x\vee y\Leftrightarrow f(y)=f(x\vee y)=f(x)\vee
  f(y)\Leftrightarrow f(x)\leq f(y)$ holds, i.e., $f$ is an
  automorphism.  The other direction is clear.
\end{proof}

\begin{corollary}
  Let $\bL$ be a finite lattice, $I$ a finite index set, and
  $M=(m_{i,j})\in \Mat_{I\times I}(\Res(\bL))$. Then $M$ is invertible
  iff the corresponding mapping $\varphi_M\in \Res(\bL^I)$ is an
  automorphism of $\bL^I$.
\end{corollary}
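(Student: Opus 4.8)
The plan is to chain together three equivalences, all of which are essentially already on the table. First I would invoke the semiring isomorphism $\big(\Mat_{I\times I}(\Res(\bL)),+,\cdot\big)\cong\big(\Res(\bL^I),\vee,\circ\big)$ established above via $\Omega$. Under this isomorphism the identity matrix (with $\id_L$ on the diagonal and $\mathbf{0}$ elsewhere) is sent to $\id_{L^I}$, and $M=(m_{i,j})$ is sent precisely to $\varphi_M=\big(\bigvee_{j\in I}m_{i,j}\big)_{i\in I}$. Hence $M$ is invertible in the matrix semiring if and only if $\varphi_M$ is invertible in $\big(\Res(\bL^I),\vee,\circ\big)$, i.e.\ there is some $\psi\in\Res(\bL^I)$ with $\psi\circ\varphi_M=\varphi_M\circ\psi=\id_{L^I}$.

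Next I would note that $\bL^I$ is again a finite lattice, being a finite power of the finite lattice $\bL$, and in particular it is complete, so the preceding Lemma applies to it: a residuated map $f\in\Res(\bL^I)$ is an automorphism of $\bL^I$ if and only if it is bijective. The remaining task is then to verify, for $f\in\Res(\bL^I)$, the equivalence of: (a) $f$ is invertible in $\big(\Res(\bL^I),\vee,\circ\big)$; (b) $f$ is bijective; (c) $f$ is an automorphism of $\bL^I$. Here (a)$\Rightarrow$(b) is immediate since a two-sided inverse with respect to $\circ$ is in particular a set-theoretic inverse; (b)$\Rightarrow$(c) is the Lemma; and for (c)$\Rightarrow$(a) one observes that if $f$ is an order automorphism of $\bL^I$ then so is $f^{-1}$, whence $f^{-1}$ preserves arbitrary suprema and the least element and therefore lies in $\Res(\bL^I)$, giving a two-sided semiring inverse of $f$. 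Combining this three-way equivalence with the first step yields that $M$ is invertible iff $\varphi_M$ is an automorphism of $\bL^I$.

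There is no real obstacle here: the statement is a bookkeeping corollary of the matrix–mapping isomorphism together with the Lemma (it also just makes precise the remark preceding the Lemma that $M$ is invertible iff $\varphi_M$ is bijective). The only point that deserves a word of care is that ``invertible'' is meant in the semiring sense — existence of a two-sided multiplicative inverse — so one must confirm that such an inverse automatically lies back in $\Res(\bL^I)$; this is exactly what the automorphism-inverse argument in the third equivalence supplies.
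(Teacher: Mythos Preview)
Your proposal is correct and follows exactly the route the paper intends: the Corollary is stated without proof, as it is immediate from the remark preceding the Lemma (that $M$ is invertible iff $\varphi_M$ is bijective, via the isomorphism $\Omega$) together with the Lemma itself. Your added care in the step (c)$\Rightarrow$(a), checking that $f^{-1}$ lands back in $\Res(\bL^I)$, is a reasonable elaboration of what the paper leaves implicit.
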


Hence, we aim to give a characterization for when a mapping of the
direct product~$\bL^I$ is an automorphism of~$\bL^I$.  If~$\bL$ is a
direct product $\bigtimes_{t\in T}\bL_t$ of irreducible lattices
$\bL_t$, $t\in T$, for a finite index set~$T$, our task is then to
determine when a mapping of the direct product $(\bigtimes_{t\in
  T}\bL_t)^I$ is an automorphism.  Consequently, it suffices to find a
criterion for mappings of direct products of irreducible lattices.  We
present such a criterion (Theorem~\ref{theorem_inv_mat}) and we
translate it so that we can answer the question when a matrix in
$\Mat_{I\times I}(\Res(\bL))$ is invertible
(Corollary~\ref{corollary_inv_mat}). In Section~\ref{sec_subsemirings}
we explain how our results apply to subsemirings of
$(\Res(\bL),\vee,\circ)$, so that, by
Proposition~\ref{prop_add_idemp_semirings}, they can be applied to
every finite additively idempotent semiring with zero and one.

\section{Direct decompositions}

In this section we investigate maximal direct decompositions of
lattices, on which our criterion for matrix invertibility will
crucially depend.

An algebra $\bA=(A,F)$ is called \emph{trivial} if $|A|=1$, otherwise
it is called \emph{nontrivial}. We call an algebra $\bA$
\emph{\mbox{irreducible}} if it is nontrivial and not isomorphic to a
direct product of two nontrivial algebras. Analogously, an ordered set
$\bP=(P,\leq)$ is called \emph{trivial} if $|P|=1$, otherwise it is
called \emph{nontrivial}.  We also call an ordered set $\bP$
\emph{irreducible} if it is nontrivial and not isomorphic to a direct
product of two nontrivial ordered sets.  Clearly, the direct product
of lattices as ordered sets is the same as the direct product of
lattices as algebras. Consequently, a lattice is irreducible as an
ordered set iff it is irreducible as an algebra.

\begin{defn}
  A \textit{subdirect decomposition} of an algebra $\bA$ is a family
  $(\Theta_t)_{ t\in T}$ of congruences of $\bA$ with
  \begin{displaymath}
    \bigcap_{t\in T}\Theta_t = \Delta_A \:.
  \end{displaymath}
  We call a subdirect decomposition $(\Theta_t)_{ t\in T}$ of $\bA$ a
  \textit{direct decomposition} of $\bA$ if the mapping
  \begin{align*}
    \iota :A\rightarrow \bigtimes_{t\in T} A/\Theta_t,\quad a\mapsto
    \big( [a]\Theta_t \big)_{t\in T}
  \end{align*}
  is surjective.  Moreover, we call a direct decomposition
  $(\Theta_t)_{ t\in T}$ of $\bA$ \textit{maximal} if $\Theta_t\neq
  \nabla_A $ for every $t\in T$ and if for every direct decomposition
  $(\Theta_s)_{ s\in S}$ of $\bA$ with $\Theta_s\neq \nabla_A$ for
  every $s\in S$ the inequality $|S|\leq |T|$ holds.
\end{defn}

The mapping $\iota$ is for every algebra $\bA$ and every subdirect
decomposition $(\Theta_t)_{ t\in T}$ of $\bA$ an injective
homomorphism. Therefore, $\bA$ is isomorphic to $\iota(\bA)$. If
$\iota$ is even surjective, then $\bA$ is isomorphic to the direct
product $\bigtimes_{t\in T} \bA/\Theta_t$.  If $\Theta_t$ is non-total
for a $t\in T$, then the factor $\bA/\Theta_t$ is nontrivial.

Let $\bA_i$, $i\in I$, be some nontrivial algebras of the same type
and let $\bA:=\bigtimes_{i\in I}\bA_i$.  For an element $a\in A$, we
denote by $a_i$ the $i$-th coordinate of $a$.  Define the congruence
$\Phi_i:=\{(a,b)\in A\times A\mid a_i = b_i\}$ for every $i\in
I$. Then $(\Phi_i)_{i\in I}$ is clearly a direct decomposition of
$\bA$ and $\Phi_i$ is non-total for every $i\in I$.  Thus for a
maximal direct decomposition $(\Theta_t)_{ t\in T}$ of $\bA$, the
inequality $|T|\geq |I|$ holds.

The next proposition is stated in \cite{Hashimoto}.

\begin{proposition}\label{unique_decomposition}
  The representation of a connected ordered set as the direct product
  of irreducible ordered sets is unique up to pairwise isomorphism of
  the factors.
\end{proposition}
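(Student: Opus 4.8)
The plan is to derive the statement from a \emph{refinement property} for direct decompositions of connected ordered sets and then to deduce uniqueness of the decomposition into irreducible factors by an induction of Krull--Schmidt type. In the terminology introduced above, writing a connected ordered set $\bP$ as a direct product $\bigtimes_{i\in I}\bA_i$ of irreducible ordered sets amounts to exhibiting a direct decomposition $(\Theta_i)_{i\in I}$ of $\bP$ with each quotient $\bP/\Theta_i$ irreducible (such a decomposition is in particular maximal), and the assertion is that the family of quotients is determined up to a bijection of the index sets and isomorphism of the matched factors. Here an ordered set is called \emph{connected} when the graph on its underlying set that joins any two comparable elements is connected, equivalently when it is not the disjoint union (as ordered sets) of two nonempty ordered sets between which no comparabilities hold; connectedness is indispensable, since otherwise uniqueness fails. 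In the finite case, which is the one needed later, the existence of a decomposition into irreducibles is immediate by induction on the cardinality, so the real content is uniqueness; the general case is \cite{Hashimoto}.

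I would first record two closure facts. If $\bP\cong\bA\times\bB$ is connected, then each factor is connected: a splitting of $\bA$ into two nonempty parts with no comparabilities across induces such a splitting of $\bP$, because $(a,x)\leq(a',x')$ forces $a\leq a'$. Conversely a direct product of connected ordered sets is connected, since one can move from $(p,q)$ to $(p',q')$ by first varying the first and then the second coordinate. Hence every irreducible factor occurring in a decomposition of a connected $\bP$ is itself connected, and in the finite case every direct factor of $\bP$ can be further decomposed into finitely many irreducible factors.

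The crux is the refinement property: if $\bP$ is connected and $\bP\cong\bA_1\times\bA_2\cong\bB_1\times\bB_2$, then there exist ordered sets $\mathbf{C}_{kl}$ for $k,l\in\{1,2\}$ with $\bA_k\cong\mathbf{C}_{k1}\times\mathbf{C}_{k2}$, $\bB_l\cong\mathbf{C}_{1l}\times\mathbf{C}_{2l}$, and $\bP\cong\bigtimes_{k,l\in\{1,2\}}\mathbf{C}_{kl}$. For finite $\bP$ I would prove this by passing to Hasse diagrams: the Hasse diagram of a direct product of ordered sets is the Cartesian graph product $\mathbin{\square}$ of the Hasse diagrams of the factors, so each direct decomposition of $\bP$ induces a Cartesian decomposition of the connected graph $H(\bP)$. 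By the classical unique prime factorization of connected graphs with respect to $\mathbin{\square}$, the two induced graph decompositions admit a common refinement into graphs $H_{kl}$; the decisive step is then to show that this graph-level refinement is \emph{order-coherent}, i.e.\ that the $H_{kl}$ can be consistently oriented so as to be the Hasse diagrams of ordered sets $\mathbf{C}_{kl}$ whose direct product is $\bP$. Each covering relation of $\bP$ maps to an edge of exactly one factor $H_{kl}$, and connectedness is precisely what forces the orientations read off from the various ``layers'' to agree; I expect verifying this coherence to be the main obstacle of the whole proof. (One could alternatively try to argue within the congruence-theoretic framework above, showing that the factor congruences of two direct decompositions of a connected ordered set are suitably compatible and then producing the $\mathbf{C}_{kl}$ as in the classical refinement theorem.)

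Granting the refinement property, I would conclude by induction on $|P|$ in the finite case. If $\bP$ is irreducible there is nothing to prove. Otherwise let $\bP\cong\bigtimes_{i\in I}\bA_i\cong\bigtimes_{j\in J}\bB_j$ with all factors irreducible and $|I|,|J|\geq2$; fix $i_0\in I$ and $j_0\in J$, group the products as $\bP\cong\bA_{i_0}\times\bigtimes_{i\neq i_0}\bA_i\cong\bB_{j_0}\times\bigtimes_{j\neq j_0}\bB_j$, and apply the refinement property to obtain the $\mathbf{C}_{kl}$. Since $\bA_{i_0}$ and $\bB_{j_0}$ are irreducible, trivial factors must occur among the $\mathbf{C}_{kl}$, and a short inspection leaves only two possibilities: either $\bA_{i_0}\cong\bB_{j_0}$ together with $\bigtimes_{i\neq i_0}\bA_i\cong\bigtimes_{j\neq j_0}\bB_j$, in which case the induction hypothesis applied to this strictly smaller ordered set finishes the argument; or the ``cross'' case $\bA_{i_0}\cong\mathbf{C}_{12}$, $\bB_{j_0}\cong\mathbf{C}_{21}$, $\bigtimes_{i\neq i_0}\bA_i\cong\bB_{j_0}\times\mathbf{C}_{22}$, $\bigtimes_{j\neq j_0}\bB_j\cong\bA_{i_0}\times\mathbf{C}_{22}$. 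In the cross case one decomposes $\mathbf{C}_{22}$ into irreducibles and invokes the induction hypothesis on the three strictly smaller ordered sets $\bigtimes_{i\neq i_0}\bA_i$, $\bigtimes_{j\neq j_0}\bB_j$ and $\mathbf{C}_{22}$ to conclude that $\bB_{j_0}\cong\bA_t$ for some $t\neq i_0$, that $\bA_{i_0}\cong\bB_s$ for some $s\neq j_0$, and that the remaining $\bA_i$ match the remaining $\bB_j$ pairwise; assembling these matchings yields the desired bijection $\sigma$ with $\bA_i\cong\bB_{\sigma(i)}$ for every $i$.
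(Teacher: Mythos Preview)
The paper does not prove this proposition; it is stated with a bare citation to Hashimoto~\cite{Hashimoto} and then only its consequences for lattices are used. There is thus no argument in the paper to compare your proposal against.

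On your sketch itself: the overall architecture---establish a refinement property for two direct decompositions of a connected poset, then run a Krull--Schmidt induction---is the standard route, and your inductive step, including the cross case, is carried out correctly. The substantive content sits in the refinement property, and there your proposed proof has a genuine gap at precisely the point you flag. Passing to undirected Hasse diagrams and invoking the Sabidussi--Vizing unique Cartesian factorization of connected graphs does yield a graph-level common refinement, but lifting it back to posets is not automatic: the four-element ``fence'' (two minimal elements, two maximal, each minimal below each maximal) is an irreducible poset whose Hasse diagram is $K_2\mathbin{\square}K_2$, showing that prime graph factors need not carry a consistent orientation across layers. What you must argue is that when both graph factorizations being refined \emph{come from} poset factorizations, the refined pieces do inherit coherent orientations; this requires an argument tied to the order (for instance, analysing which $4$-cycles of the Hasse diagram are oriented as Boolean squares and which as fences) and is not a consequence of the undirected graph theorem alone. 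Hashimoto's original proof works directly with the order relation rather than via undirected graphs.
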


Since a lattice is a connected ordered set, we get the following.

\begin{corollary}\label{corollary_bijection}
  Let $S$ and $T$ be index sets, $\bL_t$ an irreducible lattice for
  every $t\in T$, $\bL :=\bigtimes_{t\in T}\bL_t$, and
  $(\Theta_s)_{s\in S}$ a maximal direct decomposition of $\bL$. Then
  there exists a bijection $\sigma:S\rightarrow T$ with
  $\bL/\Theta_s\cong \bL_{\sigma(s)}$.
\end{corollary}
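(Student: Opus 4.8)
The plan is to use maximality of $(\Theta_s)_{s\in S}$ to force each quotient $\bL/\Theta_s$ to be irreducible, and then to apply Proposition~\ref{unique_decomposition} to the two resulting representations of $\bL$ as a direct product of irreducibles. First I would observe that, since $(\Theta_s)_{s\in S}$ is a direct decomposition, $\iota$ is an isomorphism $\bL\cong\bigtimes_{s\in S}\bL/\Theta_s$, and since $\Theta_s\neq\nabla_L$ each $\bL/\Theta_s$ is a nontrivial finite lattice (recall that here $\bL$, and hence each $\bL/\Theta_s$, is finite, so in particular $S$ and $T$ are finite). Consequently each $\bL/\Theta_s$ admits a direct decomposition $(\Psi_{s,r})_{r\in R_s}$ with $R_s\neq\emptyset$, with each $\Psi_{s,r}$ non-total, and with every quotient $(\bL/\Theta_s)/\Psi_{s,r}$ irreducible; this is clear by induction on $|\bL/\Theta_s|$, splitting off a nontrivial direct factor whenever the lattice is not already irreducible.

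Next I would refine $(\Theta_s)_{s\in S}$ along these decompositions. Writing $\pi_s\colon\bL\to\bL/\Theta_s$ for the canonical map and $R:=\{(s,r):s\in S,\ r\in R_s\}$, set $\Theta_{s,r}:=\{(a,b)\in L\times L:(\pi_s(a),\pi_s(b))\in\Psi_{s,r}\}$ for $(s,r)\in R$; this is a congruence of $\bL$ containing $\Theta_s$, and by the correspondence theorem $\bL/\Theta_{s,r}\cong(\bL/\Theta_s)/\Psi_{s,r}$, which is irreducible and in particular non-total. The routine verifications are that $\bigcap_{r\in R_s}\Theta_{s,r}=\Theta_s$ (since $\bigcap_{r\in R_s}\Psi_{s,r}=\Delta_{\bL/\Theta_s}$), so that $\bigcap_{(s,r)\in R}\Theta_{s,r}=\bigcap_{s\in S}\Theta_s=\Delta_L$, and that the induced map $\bL\to\bigtimes_{(s,r)\in R}\bL/\Theta_{s,r}$ is surjective; in other words, that refining each factor of a direct decomposition by a direct decomposition of that factor again yields a direct decomposition. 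Thus $(\Theta_{s,r})_{(s,r)\in R}$ is a direct decomposition of $\bL$ all of whose members are non-total.

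By maximality of $(\Theta_s)_{s\in S}$ we get $|R|=\sum_{s\in S}|R_s|\leq|S|$, and since $S$ is finite and $|R_s|\geq 1$ for each $s$, this forces $|R_s|=1$ for all $s$; hence every $\bL/\Theta_s$ is irreducible. Therefore $\bigtimes_{s\in S}\bL/\Theta_s\cong\bL\cong\bigtimes_{t\in T}\bL_t$ exhibit the connected ordered set $\bL$ in two ways as a direct product of irreducible ordered sets, and Proposition~\ref{unique_decomposition} yields a bijection $\sigma\colon S\to T$ with $\bL/\Theta_s\cong\bL_{\sigma(s)}$, which is the assertion. I expect the one genuinely nontrivial point to be the middle step — the transitivity/refinement property of direct decompositions (in particular the surjectivity of the combined $\iota$) together with the attendant index-set bookkeeping; the existence of decompositions into irreducibles is a straightforward induction, and the closing comparison is a direct appeal to Proposition~\ref{unique_decomposition}.
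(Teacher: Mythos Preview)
Your argument is correct and follows the paper's route: the paper deduces the corollary directly from Proposition~\ref{unique_decomposition} (observing only that lattices are connected ordered sets), leaving implicit the fact that the factors $\bL/\Theta_s$ of a maximal direct decomposition are irreducible, and your refinement argument supplies precisely this missing detail. The only minor caveat is that you invoke finiteness of $\bL$ (for the induction and for concluding $|R_s|=1$ from $\sum_{s}|R_s|\leq|S|$), which is not among the stated hypotheses of the corollary but is the only case the paper actually uses.
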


For this reason, we may assume that if $\bL$ is the direct product of
the irreducible lattices $\bL_t$, $t\in T$, then a maximal direct
decomposition of~$\bL$ is of the form $(\Theta_t)_{t\in T}$ with
$\bL/\Theta_t\cong \bL_t$ for all $t\in T$.

In \cite[Chapter 1.3, Theorem 13]{graetzer} the following result is
proven.

\begin{theorem}\label{congrunce_product}
  Let $\bL$ and $\bK$ be lattices, let $\Theta_L$ be a congruence on
  $\bL$, and let $\Theta_K$ be a congruence on $\bK$.  Define the
  relation $\Theta_L \times \Theta_K$ on $\bL\times \bK$ by
  \begin{displaymath}
    (a,b)(\Theta_L\times \Theta_K) (c,d) \quad\text{iff}\quad 
    a\Theta_L c\ \text{ and }\ b\Theta_K d \:.
  \end{displaymath}
  Then $\Theta_L \times \Theta_K$ is a congruence on $\bL\times
  \bK$. Conversely, every congruence on $\bL\times \bK$ is of this
  form.
\end{theorem}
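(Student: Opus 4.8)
The plan is to handle the easy implication directly and then, for the converse, reconstruct $\Theta_L$ and $\Theta_K$ from an arbitrary congruence on $\bL\times\bK$ by restricting it to a ``fibre''.

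First, that $\Theta_L\times\Theta_K$ is a congruence is routine: reflexivity, symmetry and transitivity hold coordinatewise, and if $(a,b)\,(\Theta_L\times\Theta_K)\,(c,d)$ and $(a',b')\,(\Theta_L\times\Theta_K)\,(c',d')$, then from $(a,c),(a',c')\in\Theta_L$ we get $(a\vee a',\,c\vee c')\in\Theta_L$ and $(a\wedge a',\,c\wedge c')\in\Theta_L$, and similarly in the second coordinate; hence $\Theta_L\times\Theta_K$ respects $\vee$ and $\wedge$.

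For the converse, let $\Theta$ be a congruence on $\bL\times\bK$. I would define $\Theta_L$ on $L$ by $(a,c)\in\Theta_L$ iff $(a,x)\,\Theta\,(c,x)$ for some $x\in K$, and symmetrically $\Theta_K$ on $K$. The key preliminary observation is that ``for some $x$'' can be upgraded to ``for all $x$'': given $(a,x_0)\,\Theta\,(c,x_0)$ and an arbitrary $x\in K$, meet both sides with $(a\vee c,\,x)$ to obtain, via the absorption law in the first coordinate, $(a,\,x_0\wedge x)\,\Theta\,(c,\,x_0\wedge x)$, and then join both sides with $(a\wedge c,\,x)$ to obtain, via absorption in the second coordinate, $(a,x)\,\Theta\,(c,x)$. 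With this fibre-independence established, $\Theta_L$ is clearly an equivalence relation, and it is compatible with $\vee$ and $\wedge$ because one can apply $\Theta$ to the coordinatewise joins and meets at a common element of $K$; thus $\Theta_L$, and likewise $\Theta_K$, is a congruence.

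It then remains to check $\Theta=\Theta_L\times\Theta_K$. For $\supseteq$: if $(a,c)\in\Theta_L$ and $(b,d)\in\Theta_K$, then $(a,b)\,\Theta\,(c,b)$ and $(c,b)\,\Theta\,(c,d)$, so $(a,b)\,\Theta\,(c,d)$ by transitivity. For $\subseteq$: given $(a,b)\,\Theta\,(c,d)$, meeting both sides with $(a\vee c,\,b\wedge d)$ yields $(a,\,b\wedge d)\,\Theta\,(c,\,b\wedge d)$, so $(a,c)\in\Theta_L$, and meeting both sides with $(a\wedge c,\,b\vee d)$ yields $(a\wedge c,\,b)\,\Theta\,(a\wedge c,\,d)$, so $(b,d)\in\Theta_K$; hence $(a,b)\,(\Theta_L\times\Theta_K)\,(c,d)$. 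The only subtle point is the fibre-independence step, namely picking the correct elements to meet with and then join with so that the absorption identities collapse the expressions to the intended form; note that no bounds of $\bL$ or $\bK$ are used anywhere, which matters since the lattices are not assumed bounded. The remaining steps are straightforward verifications.
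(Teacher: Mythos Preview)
Your proof is correct; the fibre-independence step via meeting with $(a\vee c,x)$ and then joining with $(a\wedge c,x)$ is exactly the right use of absorption, and the two inclusions establishing $\Theta=\Theta_L\times\Theta_K$ go through as written. Note that the paper itself does not prove this theorem but simply cites Gr\"atzer's \emph{General Lattice Theory} (Chapter~1.3, Theorem~13); your argument is essentially the standard one found there, so there is nothing to compare beyond observing that you have supplied a self-contained proof where the paper defers to the reference.
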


Note that `$\Theta_L\times \Theta_K$' is a slight abuse of notation,
since it is not identical to the Cartesian product of the two sets
$\Theta_L$ and $\Theta_K$.

It further holds that
\begin{equation}
  [a]\Theta_L\times [b]\Theta_K = 
  \{(c,d)\in L\times K\mid a\Theta_L c\text{ and }b\Theta_K
  d\} = [(a,b)](\Theta_L\times \Theta_K)
  \label{eq_congruence_product}
\end{equation} 
and so
\begin{equation}
  \bL/\Theta_L \times \bK/\Theta_K = 
  (\bL\times \bK)/(\Theta_L\times \Theta_K) \:.
  \label{eq_congruence_product2}
\end{equation}

The following result is a strengthening of Corollary
\ref{corollary_bijection}.

\begin{lemma}\label{lemma_congruences}
  Let $T$ be a finite index set, $\bL_t$ an irreducible lattice for
  every $t\in T$, $\bL :=\bigtimes_{t\in T}\bL_t$, and
  $(\Theta_t)_{t\in T}$ a maximal direct decomposition of $\bL$.  Then
  there exists a permutation $\sigma$ of $T$ with $\bL_{t}\cong
  \bL_{\sigma(t)}$ and
  \begin{displaymath}
    (x_s)_{s\in T}\Theta_{\sigma(t)}(y_s)_{s\in T} \quad\Leftrightarrow\quad
    x_{t} = y_{t}
  \end{displaymath}
  for all $(x_s)_{s\in T},(y_s)_{s\in T}\in L$ and $t\in T$.
\end{lemma}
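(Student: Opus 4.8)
The plan is to show that each $\Theta_t$ of the given maximal direct decomposition coincides with one of the \emph{coordinate congruences} $\Phi_i:=\{(x,y)\in L\times L\mid x_i=y_i\}$, $i\in T$, that the resulting map $t\mapsto i$ is a permutation of $T$, and then to let $\sigma$ be its inverse. The first step is to describe congruences coordinatewise: iterating Theorem~\ref{congrunce_product} over the finite set $T$, every congruence $\Theta$ of $\bL=\bigtimes_{t\in T}\bL_t$ is given by a family $(\theta_s)_{s\in T}$, with $\theta_s$ a congruence on $\bL_s$, via $(x_s)_s\,\Theta\,(y_s)_s \Leftrightarrow x_s\,\theta_s\,y_s$ for all $s\in T$; moreover $\bL/\Theta\cong\bigtimes_{s\in T}\bL_s/\theta_s$ by iterating~\eqref{eq_congruence_product2}, and under this correspondence $\Delta_L$ goes to $(\Delta_{L_s})_s$ and intersections of congruences of $\bL$ go to coordinatewise intersections. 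For $t\in T$ write $(\theta_{t,s})_{s\in T}$ for the family corresponding to $\Theta_t$.

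The second step uses irreducibility. By the convention adopted just before the lemma we may assume $\bL/\Theta_t\cong\bL_t$, so $\bL/\Theta_t\cong\bigtimes_{s\in T}\bL_s/\theta_{t,s}$ is irreducible, in particular nontrivial. A direct product of lattices that is irreducible has exactly one nontrivial factor: two nontrivial factors would exhibit it as a product of two nontrivial lattices, while no nontrivial factor would make it trivial. Hence for every $t$ there is a unique $\tau(t)\in T$ with $\bL_{\tau(t)}/\theta_{t,\tau(t)}$ nontrivial, and $\theta_{t,s}=\nabla_{L_s}$ for every $s\neq\tau(t)$.

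The third step pins down $\tau$ and the surviving congruences. Since $\bigcap_{t\in T}\Theta_t=\Delta_L$, coordinatewise this reads $\bigcap_{t\in T}\theta_{t,s}=\Delta_{L_s}$ for each $s$. In that intersection every factor with $\tau(t)\neq s$ is $\nabla_{L_s}$, and as $\bL_s$ is nontrivial we have $\nabla_{L_s}\neq\Delta_{L_s}$; hence $\tau^{-1}(s)\neq\emptyset$, so $\tau\colon T\to T$ is surjective and, $T$ being finite, a permutation. Then $\tau^{-1}(s)$ is a single index and the intersection collapses to $\theta_{\tau^{-1}(s),s}=\Delta_{L_s}$, i.e.\ $\theta_{t,\tau(t)}=\Delta_{L_{\tau(t)}}$ for every $t$. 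Consequently $(x_s)_s\,\Theta_t\,(y_s)_s\Leftrightarrow x_{\tau(t)}=y_{\tau(t)}$, that is, $\Theta_t=\Phi_{\tau(t)}$. Taking $\sigma:=\tau^{-1}$, a permutation of $T$, gives $\Theta_{\sigma(t)}=\Phi_t$, which is exactly the stated equivalence; and since $\bL/\Theta_{\sigma(t)}\cong\bL_{\sigma(t)}$ by the convention and since $\bL/\Phi_t\cong\bL_t$, we also obtain $\bL_{\sigma(t)}\cong\bL_t$.

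I expect the routine bookkeeping of the first step --- iterating the two-factor decomposition of congruences to $|T|$ factors and checking that it transports $\Delta_L$, intersections, and quotients correctly --- to be the main thing to get right; the irreducibility dichotomy of the second step is the conceptual heart, but it is short once the coordinatewise description of congruences is in place. It is worth noting that the normalization $\bL/\Theta_t\cong\bL_t$ is genuinely used in the last line: without it one still gets $\Theta_t=\Phi_{\tau(t)}$, but the isomorphism clause $\bL_t\cong\bL_{\sigma(t)}$ would fail in general.
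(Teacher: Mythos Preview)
Your proof is correct and rests on the same two pillars as the paper's---the coordinatewise description of congruences from Theorem~\ref{congrunce_product} and the ``exactly one nontrivial factor'' consequence of irreducibility---but you organize them differently, and the reorganization buys a genuine simplification. The paper fixes a coordinate $t_0$, splits $\bL=\bL_{t_0}\times\bL'$ using Theorem~\ref{congrunce_product} only in its two-factor form, and then spends most of its effort verifying that the $\bL_{t_0}$-components $(\Theta_t^{t_0})_{t\in T}$ form a \emph{direct} decomposition of $\bL_{t_0}$ (the surjectivity of the natural map is the longest step); irreducibility of $\bL_{t_0}$ then singles out $t_1=\sigma(t_0)$, and a second appeal to irreducibility (of $\bL_{t_1}\cong\bL/\Theta_{t_1}$) forces $\Theta_{t_1}'=\nabla_{L'}$. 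You instead iterate Theorem~\ref{congrunce_product} and~\eqref{eq_congruence_product2} to the full $|T|$-fold product at the outset, apply irreducibility once (to $\bL/\Theta_t\cong\bL_t$) to locate $\tau(t)$, and then use only the \emph{subdirect} condition $\bigcap_t\Theta_t=\Delta_L$ coordinatewise---never the surjectivity of $\iota$---to see that $\tau$ is a bijection and that $\theta_{t,\tau(t)}=\Delta$. The price is the routine induction extending the two-factor statements to finitely many factors; the payoff is that the paper's surjectivity verification disappears entirely.
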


\begin{proof}
  By Corollary~\ref{corollary_bijection}, we may assume that
  $\bL/\Theta_t\cong \bL_t$ holds for all $t\in T$.  We fix $t_0\in T$
  and define $\bL':=\bigtimes_{t\in T\setminus\{t_0\}}\bL_t$.  Thus,
  $\bL = \bL_{t_0}\times \bL'$. By Theorem~\ref{congrunce_product},
  there exist for every $t\in T$ congruences $\Theta_t^{t_0}\in
  \Con(\bL_{t_0})$, $\Theta_t'\in \Con(\bL')$ with $\Theta_t =
  \Theta_t^{t_0}\times \Theta_t'$. We will show that
  $(\Theta_t^{t_0})_{t\in T}$ is a direct decomposition of
  $\bL_{t_0}$. Let $(x,x')\in\bigcap_{t\in T}\Theta_t^{t_0}$. We have
  to show that $x=x'$ holds.  Let $\bar{y}\in L'$. Thus,
  $(\bar{y},\bar{y})\in \bigcap_{t\in T}\Theta_t'$ and consequently
  $((x,\bar{y}),(x',\bar{y}))\in \bigcap_{t\in T}\Theta_t =
  \Delta_L$. So, we have $(x,\bar{y})=(x',\bar{y})$ and therefore
  $x=x'$. Hence, $(\Theta_t^{t_0})_{t\in T}$ is a subdirect
  decomposition of $\bL_{t_0}$. Now let $x_t\in \bL_{t_0}$ for every
  $t\in T$.  We will show that there exists an element $z\in
  \bL_{t_0}$ with $[z]\Theta^{t_0}_t=[x_t]\Theta^{t_0}_t$ for every
  $t\in T$. Choose an element $(y_t)_{t\in T\setminus \{t_0\}} \in
  \bL'$. For every $s\in T$ we will regard $(x_{s},(y_t)_{t\in
    T\setminus\{t_0\}})\in L$ as the element in $L$, where the
  $t_0$-th coordinate is $x_s$. Since $(\Theta_t)_{t\in T}$ is a
  direct decomposition of $\bL$, there exists an element
  $(\hat{x}_t)_{t\in T}\in L$ with $[(\hat{x}_t)_{t\in T}]\Theta_s =
  [(x_{s},(y_t)_{t\in T\setminus\{t_0\}})]\Theta_s$ for every $s\in
  T$. By Equation~(\ref{eq_congruence_product}), for every $s\in T$,
  \begin{align*}
    [\hat{x}_{t_0}]\Theta_s^{t_0}\times [(\hat{x}_t)_{t\in
      T\setminus\{t_0\}}]\Theta_s'=[(\hat{x}_t)_{t\in T}]\Theta_s
    &= [(x_{s},(y_t)_{t\in T\setminus\{t_0\}})]\Theta_s\\
    &= [x_s]\Theta_s^{t_0}\times [(y_t)_{t\in T\setminus\{t_0\}}]\Theta_s'
  \end{align*}
  holds and it follows that $[\hat{x}_{t_0}]\Theta_s^{t_0} =
  [x_s]\Theta_s^{t_0}$.  Hence, $\hat{x}_{t_0}$ is the desired
  element~$z$ and it follows that $(\Theta_t^{t_0})_{t\in T}$ is a
  direct decomposition of $\bL_{t_0}$.  Consequently, \mbox{$\bL_{t_0}
    \cong \bigtimes_{t\in T} (\bL_{t_0}/\Theta_t^{t_0})$} and since
  $\bL_{t_0}$ is irreducible, there exists a unique $t_1\in T$ with
  $\bL_{t_0}\cong\bL_{t_0}/\Theta_{t_1}^{t_0}$. Thus,
  $\Theta_{t_1}^{t_0}=\Delta_{L_{t_0}}$. By this and Equation
  (\ref{eq_congruence_product2}), it follows that
  \begin{displaymath}
    \bL_{t_0}\times \bL'/\Theta_{t_1}'\cong
    \bL_{t_0}/\Theta_{t_1}^{t_0}\times \bL'/\Theta_{t_1}' =
    (\bL_{t_0}\times \bL')/(\Theta_{t_1}^{t_0}\times
    \Theta_{t_1}')=\bL/\Theta_{t_1}\cong \bL_{t_1}\:.
  \end{displaymath}
  Since $\bL_{t_1}$ is irreducible and $\bL_{t_0}$ nontrivial, we have
  $\bL_{t_0}\cong \bL_{t_1}$ and ${|\bL'/\Theta_{t_1}'|=1}$.  Hence,
  $\Theta_{t_1}' = \nabla_{L'}$. We derive $ (x_t)_{t\in T} \Theta_{t_1}
  (y_t)_{t\in T} \Leftrightarrow x_{t_0} = y_{t_0}$ for all
  $(x_t)_{t\in T} , (y_t)_{t\in T}\in L$.

  We have shown that there exists a mapping $\sigma:T\rightarrow T$
  with $\bL_t\cong \bL_{\sigma(t)}$ and $(x_s)_{s\in T}
  \Theta_{\sigma(t)} (y_s)_{s\in T} \Leftrightarrow x_{t} = y_{t}$ for
  all $(x_s)_{s\in T} , (y_s)_{s\in T}\in L$.  Indeed, with the
  notation above we have $t_1=\sigma(t_0)$.  It remains to show that
  $\sigma$ is injective.  Let $t_2,t_3\in T$ with
  $\sigma(t_2)=\sigma(t_3)$.  There follows the equivalence $
  x_{t_2}=y_{t_2}\Leftrightarrow (x_t)_{t\in T} \Theta_{\sigma(t_2)}
  (y_t)_{t\in T} \Leftrightarrow (x_t)_{t\in T} \Theta_{\sigma(t_3)}
  (y_t)_{t\in T} \Leftrightarrow x_{t_3} = y_{t_3} $ for all
  $(x_t)_{t\in T}$, $(y_t)_{t\in T}\in L$ and we find that $t_2=t_3$.
\end{proof}

\section{Invertible matrices}

\subsection{A criterion}

The following theorem states a criterion for a mapping of a direct
product of irreducible lattices to be an automorphism.  It is
basically a consequence of Lemma~\ref{lemma_congruences}.  We will see
the corresponding result for matrices in
Corollary~\ref{corollary_inv_mat}.

\begin{theorem}\label{theorem_inv_mat}
  Let $T$ be a finite index set, $\bL_t$ an irreducible lattice for
  every $t\in T$, $\bL:=\bigtimes_{t\in T}\bL_t$, and
  $\varphi:L\rightarrow L$ a mapping. Then $\varphi\in \Aut(\bL)$ iff
  there exists a permutation $\sigma$ of $T$ and isomorphisms
  $\varphi_t:L_t\rightarrow L_{\sigma^{-1}(t)}$ for every $t\in T$
  such that
  \begin{displaymath}
    \varphi = (\varphi_{\sigma(t)}\circ \pi_{\sigma(t)})_{t\in T} \:,
  \end{displaymath}
  where $\pi_t$ is the $t$-th projection, i.e., $\varphi((x_t)_{t\in
    T}) = (\varphi_{\sigma(t)}(x_{\sigma(t)}))_{t\in T}$ for all
  $(x_t)_{t\in T}\in L$.
\end{theorem}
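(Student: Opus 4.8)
The plan is to prove the ``if'' direction by direct verification and the ``only if'' direction by extracting the permutation and the component isomorphisms from a maximal direct decomposition of $\bL$, using Lemma~\ref{lemma_congruences}. For the easy direction: suppose $\varphi=(\varphi_{\sigma(t)}\circ\pi_{\sigma(t)})_{t\in T}$ with $\sigma$ a permutation of $T$ and $\varphi_t:L_t\to L_{\sigma^{-1}(t)}$ isomorphisms. Then $\varphi$ is a composition of the coordinate permutation $(x_t)_{t\in T}\mapsto(x_{\sigma(t)})_{t\in T}$, which is an automorphism of $\bL$ since each $\bL_t\cong\bL_{\sigma(t)}$ via the appropriate isomorphism, with the ``diagonal'' map applying $\varphi_{\sigma(t)}$ in coordinate $t$; more carefully, one checks that the map $(x_t)_{t\in T}\mapsto(\varphi_{\sigma(t)}(x_{\sigma(t)}))_{t\in T}$ is a bijection $L\to L$ (its inverse is $(y_t)_{t\in T}\mapsto(\varphi_t^{-1}(y_{\sigma^{-1}(t)}))_{t\in T}$) and that it preserves the order in both directions, because each $\varphi_{\sigma(t)}$ does and the order on a direct product is componentwise. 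Hence $\varphi\in\Aut(\bL)$.

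For the ``only if'' direction, suppose $\varphi\in\Aut(\bL)$. The key observation is that the family $(\ker(\pi_t\circ\varphi))_{t\in T}$ is a maximal direct decomposition of $\bL$. Indeed, $\varphi$ being an automorphism, $\ker(\pi_t\circ\varphi)$ is the preimage under $\varphi\times\varphi$ of the congruence $\Phi_t=\{(a,b):a_t=b_t\}$, so these are congruences with intersection $\Delta_L$ (since $\bigcap_t\Phi_t=\Delta_L$ and $\varphi$ is injective), each is non-total, and the induced map $L\to\bigtimes_{t\in T}L/\ker(\pi_t\circ\varphi)$ is surjective because it factors as $\varphi$ followed by the canonical isomorphism $L\to\bigtimes_t L/\Phi_t$; maximality follows because $L$ is the product of the $|T|$ irreducible lattices $\bL_t$ so no direct decomposition into non-total factors can have more than $|T|$ members. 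Now apply Lemma~\ref{lemma_congruences} to this maximal direct decomposition: there is a permutation $\tau$ of $T$ with $\bL_t\cong\bL_{\tau(t)}$ and $(x_s)_{s\in T}\,\ker(\pi_{\tau(t)}\circ\varphi)\,(y_s)_{s\in T}\iff x_t=y_t$. Unwinding the definition of $\ker$, this says $\pi_{\tau(t)}(\varphi((x_s)_s))$ depends only on $x_t$; that is, there is a well-defined map $\psi_t:L_t\to L_{\tau(t)}$ with $\pi_{\tau(t)}\circ\varphi = \psi_t\circ\pi_t$ for every $t\in T$. Setting $\sigma:=\tau^{-1}$ and $\varphi_{\tau(t)}:=\psi_t$ (so $\varphi_s:L_{\sigma(s)}\to L_s$, matching the theorem's indexing $\varphi_t:L_t\to L_{\sigma^{-1}(t)}$ after relabelling), we get $\varphi=(\varphi_{\sigma(t)}\circ\pi_{\sigma(t)})_{t\in T}$ up to the bookkeeping.

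It then remains to check that each $\psi_t$ is an isomorphism of lattices. That $\psi_t$ is a lattice homomorphism is immediate from $\psi_t\circ\pi_t=\pi_{\tau(t)}\circ\varphi$ and the fact that $\pi_t$ is a surjective homomorphism and $\pi_{\tau(t)}\circ\varphi$ is a homomorphism. Surjectivity of $\psi_t$: given $b\in L_{\tau(t)}$, pick $a\in L$ with $\varphi(a)$ having $\tau(t)$-th coordinate $b$ (using surjectivity of $\varphi$), then $\psi_t(a_t)=b$. Injectivity follows because if $\psi_t(x)=\psi_t(x')$ then any two elements of $L$ with $t$-th coordinates $x,x'$ and all other coordinates agreeing have $\varphi$-images agreeing in coordinate $\tau(t)$; but we must rule out disagreement in the other coordinates too. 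Here I would argue as follows: since $\varphi$ is an order isomorphism and $\bL$ is a direct product, $\varphi$ carries the congruence $\ker(\pi_s\circ\varphi)$-structure exactly, and we have already identified $\ker(\pi_{\tau(t)}\circ\varphi)$ with the ``$t$-th coordinate'' congruence $\Phi_t$; hence $\varphi$ induces a well-defined isomorphism $\bL/\Phi_t\to\bL/\Phi_{\tau(t)}$, and under the canonical identifications $\bL/\Phi_t\cong\bL_t$ this isomorphism is precisely $\psi_t$. This makes $\psi_t$ an isomorphism without a separate injectivity computation. Finally, from $\varphi((x_t)_t)=(\psi_{\sigma(t)}(x_{\sigma(t)}))_t$ one reads off the claimed formula $\varphi=(\varphi_{\sigma(t)}\circ\pi_{\sigma(t)})_{t\in T}$. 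The main obstacle is the bookkeeping in the second direction --- correctly matching the direction of $\sigma$ versus $\sigma^{-1}$ and the source/target of each $\varphi_t$ --- together with the clean argument that the induced maps on factors are genuine isomorphisms; everything else is routine given Lemma~\ref{lemma_congruences}.
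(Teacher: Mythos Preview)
Your proposal is correct and follows essentially the same route as the paper: define $\Theta_t=\ker(\pi_t\circ\varphi)$, verify that $(\Theta_t)_{t\in T}$ is a maximal direct decomposition of $\bL$, invoke Lemma~\ref{lemma_congruences} to obtain the permutation, and read off the component maps. Your extra care in checking that the induced maps $\psi_t$ are isomorphisms (via the induced map on quotients) is a detail the paper leaves implicit; the only slip is in the bookkeeping---with $\sigma=\tau^{-1}$ one should set $\varphi_t:=\psi_t$ directly rather than $\varphi_{\tau(t)}:=\psi_t$---but you already flag this as the place to be careful.
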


\begin{proof}
  Let $\varphi\in \Aut(\bL)$, $\varphi^t:=\pi_t\circ \varphi$ for
  every $t\in T$, and $\Theta_t:=\ker(\varphi^t)$ for every $t\in
  T$. We will show that $(\Theta_t)_{t\in T}$ is a maximal direct
  decomposition of $\bL$. We have
  \begin{align*}
    (x,y)\in \bigcap_{t\in T}\Theta_t &\ \Leftrightarrow\ 
    \forall t\in T: (x,y)\in \Theta_t\\
    &\ \Leftrightarrow\  \forall t\in T: \varphi^t(x)=\varphi^t(y)
    \ \Leftrightarrow\ \varphi(x)=\varphi(y)
    \ \Leftrightarrow\  x=y
  \end{align*}
  for all $x,y\in L$, i.e., $\bigcap_{t\in
    T}\Theta_t=\Delta_L$. Therefore, $(\Theta_t)_{t\in T}$ is a
  subdirect decomposition of $\bL$.  Let $y^t\in L$ for every $t\in
  T$. We will show that there exists a $z\in L$ with
  $[z]\Theta_t=[y^t]\Theta_t$ for every $t\in T$. Let
  $x_t:=\varphi^t(y^t)$ for every $t\in T$, let $x:=(x_t)_{t\in T}$,
  and let $z:=\varphi^{-1}(x)$. It follows that
  $\varphi^t(z)=x_t=\varphi^t(y^t)$ and therefore that $z\Theta_t y^t$
  for every $t\in T$. Hence, $[z]\Theta_t=[y^t]\Theta_t$ for every
  $t\in T$ and $(\Theta_t)_{t\in T}$ is consequently a direct
  decomposition. Since $\varphi$ is bijective, $\Theta_t\neq \nabla_L$
  holds for every $t\in T$. Because a maximal direct decomposition of
  $\bL$ has exactly $|T|$ elements by
  Corollary~\ref{corollary_bijection}, $(\Theta_t)_{t\in T}$ is a
  maximal direct decomposition.

  By Lemma~\ref{lemma_congruences}, there exists a permutation $\sigma$
  of $T$ with $\bL_t\cong \bL_{\sigma(t)}$ and $x\Theta_t
  y\Leftrightarrow x_{\sigma(t)}=y_{\sigma(t)}$ for every $t\in T$ and
  $x,y\in L$. It follows that $\varphi^t(x)= \varphi^t(y)
  \Leftrightarrow x\Theta_t y\Leftrightarrow
  x_{\sigma(t)}=y_{\sigma(t)}$, i.e., $\varphi^t(x)$ depends only on
  $x_{\sigma(t)}$ for every $t\in T$. With
  $\varphi_{\sigma(t)}:=\varphi^t\circ \epsilon_{\sigma(t)}$, where
  $\epsilon_s:L_s\rightarrow L$ is the $s$-th canonical injection,
  it follows that the given criterion is necessary.
  
  The sufficiency of the criterion is trivial. 
\end{proof}

Let $T,I$ be finite index sets, $\bL_t$ an irreducible finite lattice
for every $t\in T$, and $\bL:=\bigtimes_{t\in T}\bL_t$. Then $\bL^I =
\bigtimes_{(t,i)\in T\times I}\bL_{t,i}$, where $\bL_{t,i} = \bL_t$
for every $(t,i)\in T\times I$.  With this notation we derive in the
following the corresponding result for invertible matrices. For a
matrix $A\in \Mat_{I\times I}(\Res(\bL))$, we will denote the $i$-th
row by $A_i$ and we will regard $A_i$ as mapping from $L^I$ to $L$.

\begin{corollary}\label{corollary_inv_mat}
  Let $T,I$ be finite index sets, $\bL_t$ an irreducible finite
  lattice for every $t\in T$, $\bL:=\bigtimes_{t\in T}\bL_t$, and
  $A=(a_{i,j})\in \Mat_{I\times I}(\Res(\bL))$. Then $A$ is invertible
  iff there exists a permutation $\sigma$ of $T\times I$ and an
  isomorphism $\varphi_{t,i}:L_{t,i}\rightarrow L_{\sigma^{-1}(t,i)}$
  for every $(t,i)\in T\times I$ such that
  \begin{displaymath}
    \pi_t\circ A_i= \varphi_{\sigma(t,i)}\circ \pi_{\sigma(t,i)} \:,
  \end{displaymath}
  where $\pi_t$ is the projection from $\bL$ to $\bL_t$ and
  $\pi_{t,i}$ the projection from $\bL^I$ to $\bL_{t,i}$.
\end{corollary}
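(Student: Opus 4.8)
The plan is to deduce the statement from Theorem~\ref{theorem_inv_mat} by applying it to the product lattice $\bL^I$ with its natural doubly-indexed decomposition. Recall from the discussion preceding this corollary that under the isomorphism $\Mat_{I\times I}(\Res(\bL))\cong\Res(\bL^I)$ the matrix $A=(a_{i,j})$ corresponds to the residuated mapping $\varphi_A:=(\bigvee_{j\in I}a_{i,j})_{i\in I}\in\Res(\bL^I)$, and that $A$ is invertible iff $\varphi_A$ is an automorphism of $\bL^I$. Moreover, by the construction of this isomorphism (the map $\Omega$ of the excerpt), the $i$-th coordinate function of $\varphi_A$, viewed as a map $L^I\to L$, is precisely the row map $A_i$, i.e., $\pi_i\circ\varphi_A=A_i$, where $\pi_i:\bL^I\to\bL$ denotes the $i$-th product projection.

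First I would note that $\bL^I=\bigtimes_{(t,i)\in T\times I}\bL_{t,i}$ with $\bL_{t,i}=\bL_t$ is a direct product of irreducible finite lattices indexed by the finite set $T\times I$, so Theorem~\ref{theorem_inv_mat} applies verbatim with $T$ replaced by $T\times I$ and $\bL$ by $\bL^I$. It yields that $\varphi_A\in\Aut(\bL^I)$ iff there exist a permutation $\sigma$ of $T\times I$ and isomorphisms $\varphi_{t,i}:L_{t,i}\to L_{\sigma^{-1}(t,i)}$ with $\pi_{t,i}\circ\varphi_A=\varphi_{\sigma(t,i)}\circ\pi_{\sigma(t,i)}$ for every $(t,i)\in T\times I$, where $\pi_{t,i}$ is the projection $\bL^I\to\bL_{t,i}$ and $\pi_{\sigma(t,i)}$ the projection $\bL^I\to\bL_{\sigma(t,i)}$.

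It then remains only to rewrite the left-hand side in terms of $A$. Here I would use that the projection $\pi_{t,i}:\bL^I\to\bL_{t,i}$ factors as $\pi_t\circ\pi_i$, where $\pi_i:\bL^I\to\bL$ is the $i$-th product projection and $\pi_t:\bL\to\bL_t$ the $t$-th one; combined with $\pi_i\circ\varphi_A=A_i$ this gives $\pi_{t,i}\circ\varphi_A=\pi_t\circ A_i$, so the criterion becomes $\pi_t\circ A_i=\varphi_{\sigma(t,i)}\circ\pi_{\sigma(t,i)}$, which is exactly the assertion. There is no real obstacle here beyond the index bookkeeping: the only two facts that have to be checked — the factorization $\pi_{t,i}=\pi_t\circ\pi_i$ and the identity $\pi_i\circ\varphi_A=A_i$ — are immediate from the definition of the product decomposition and of the semiring isomorphism $\Mat_{I\times I}(\Res(\bL))\cong\Res(\bL^I)$, so the proof is a short translation of Theorem~\ref{theorem_inv_mat}.
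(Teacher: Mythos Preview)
Your proposal is correct and is exactly the argument the paper intends: apply Theorem~\ref{theorem_inv_mat} to the doubly-indexed product $\bL^I=\bigtimes_{(t,i)\in T\times I}\bL_{t,i}$, use that $A$ is invertible iff $\varphi_A\in\Aut(\bL^I)$, and rewrite via $\pi_{t,i}=\pi_t\circ\pi_i$ and $\pi_i\circ\varphi_A=A_i$. The paper does not spell out a separate proof but sets up precisely this notation in the paragraph preceding the corollary, so your write-up is a faithful expansion of what is left implicit.
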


If $A$ is invertible, then 
\begin{displaymath}
  \varphi_A = (\varphi_{\sigma(t,i)}\circ
  \pi_{\sigma(t,i)})_{(t,i)\in T\times I}
\end{displaymath}
is the corresponding mapping of $A$ in $\Res(\bL^I)$ and $a_{i,j}$ is
of the form $a_{i,j}=(\hat{\varphi}_{i,j,t})_{t\in T}$ with
\begin{displaymath}
  \hat{\varphi}_{i,j,t} = \begin{cases}
    \varphi_{\sigma(t,i)} \quad &\text{if } \exists s\in T:
    \sigma(t,i)=(s,j),\\
    \bar{0}_{\bL_t} &\text{otherwise},
  \end{cases}
\end{displaymath}
where $\bar{0}_{\bL_t}$ is the mapping that maps constantly to
$0_{\bL_t}$.

In the special case where $\bL$ is irreducible, we need not to
consider the index set $T$, since it has just one element.  Then the
equation in Corollary~\ref{corollary_inv_mat} is of the form
$A_i=\varphi_{\sigma(i)}\circ\pi_{\sigma(i)}$ for every $i\in I$,
i.e., $a_{i,\sigma(i)}$ is the only nonzero entry in the $i$-th row
and $a_{i,\sigma(i)}=\varphi_{\sigma(i)}$ holds.  We call a matrix a
\emph{generalized permutation matrix} (or \emph{monomial matrix}) if
each row and each column has exactly one nonzero entry and this
nonzero entry is invertible.

\begin{corollary}\label{cor_gen_perm_mat}
  Let $\bL$ be a finite irreducible lattice, $I$ a finite index set,
  and \mbox{$A\in \Mat_{I\times I}(\Res(\bL))$}. Then $A$ is
  invertible iff $A$ is a generalized permutation matrix.
\end{corollary}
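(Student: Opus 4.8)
The plan is to derive Corollary~\ref{cor_gen_perm_mat} directly from Corollary~\ref{corollary_inv_mat} by specializing to the case where the index set $T$ is a singleton, so that $\bL$ itself is irreducible and $\bL^I=\bigtimes_{i\in I}\bL$. In this situation the permutation $\sigma$ of $T\times I$ becomes a permutation of $I$, which I will also call $\sigma$, and the family of isomorphisms reduces to maps $\varphi_{i}:L\rightarrow L$ (automorphisms of $\bL$, since all factors are copies of $\bL$). The condition $\pi_t\circ A_i=\varphi_{\sigma(t,i)}\circ\pi_{\sigma(t,i)}$ collapses to $A_i=\varphi_{\sigma(i)}\circ\pi_{\sigma(i)}$, exactly as noted in the paragraph preceding the corollary.

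First I would unwind what $A_i=\varphi_{\sigma(i)}\circ\pi_{\sigma(i)}$ means entrywise. Writing $A_i=\bigvee_{j\in I}a_{i,j}\circ\pi_j$ as a mapping $L^I\to L$, the equality with $\varphi_{\sigma(i)}\circ\pi_{\sigma(i)}$ forces, by evaluating on tuples supported on a single coordinate, that $a_{i,j}=\bar 0_{\bL}$ for $j\neq\sigma(i)$ and $a_{i,\sigma(i)}=\varphi_{\sigma(i)}$. Thus row $i$ has exactly one nonzero entry, located in column $\sigma(i)$, and that entry is the automorphism $\varphi_{\sigma(i)}$, hence invertible. Since $\sigma$ is a bijection of $I$, the map $i\mapsto\sigma(i)$ being a permutation guarantees that each column likewise contains exactly one nonzero entry. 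This shows invertibility implies $A$ is a generalized permutation matrix.

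For the converse, suppose $A$ is a generalized permutation matrix: there is a permutation $\tau$ of $I$ with $a_{i,\tau(i)}$ the unique nonzero entry of row $i$ and $a_{i,\tau(i)}$ invertible in $\Res(\bL)$, hence an automorphism of $\bL$ by the Lemma stating that a bijective residuated map is an automorphism. Then I would set $\sigma:=\tau$ and $\varphi_{\sigma(i)}:=a_{i,\tau(i)}$, and check that $A_i=\varphi_{\sigma(i)}\circ\pi_{\sigma(i)}$ holds, which is immediate from the fact that the only nonzero summand in $A_i=\bigvee_{j}a_{i,j}\circ\pi_j$ is the one with $j=\tau(i)$. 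Corollary~\ref{corollary_inv_mat} (with $T$ a singleton) then yields that $A$ is invertible.

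I do not anticipate a serious obstacle here; the main point is simply to verify carefully the translation ``$A_i=\varphi_{\sigma(i)}\circ\pi_{\sigma(i)}$ for all $i$'' is equivalent to ``$A$ is a generalized permutation matrix,'' and in particular that a permutation of $I$ on the row side automatically makes the column condition hold. The only subtlety worth spelling out is why invertibility in the semiring $\Res(\bL)$ of the single nonzero entry is the same as that entry being an automorphism of $\bL$, which is precisely the content of the Lemma on bijective residuated maps together with the fact that $\Res(\bL)$ is a monoid under composition with identity $\id_L$, so invertible means two-sided inverse, i.e.\ bijective.
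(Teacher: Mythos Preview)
Your proposal is correct and follows precisely the route the paper takes: the paragraph immediately preceding Corollary~\ref{cor_gen_perm_mat} already notes that when $|T|=1$ the criterion of Corollary~\ref{corollary_inv_mat} reduces to $A_i=\varphi_{\sigma(i)}\circ\pi_{\sigma(i)}$, which is exactly the generalized permutation matrix condition. Your entrywise verification and the converse argument simply spell out what the paper leaves implicit.
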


\subsection{Number of invertible matrices}

As another consequence of Theorem~\ref{theorem_inv_mat} we find the
following.

\begin{corollary}
  Let $T$ be a finite index set, $\bL_t$, $t\in T$, pairwise distinct
  irreducible lattices, $e_t\in \mathbb{N}$ for every $t\in T$, and
  $\bL:=\bigtimes_{t\in T}\bL_t^{e_t}$. Then
  \begin{displaymath}
    |\Aut(\bL)| = \prod_{t\in T}e_t!\cdot |\Aut(\bL_t)|^{e_t} \:.
  \end{displaymath}
  In particular, for a finite index set $I$ we have 
  \begin{displaymath}
    |\Aut(\bL^I)| = \prod_{t\in T}(e_t \cdot |I|)!\cdot
    |\Aut(\bL_t)|^{e_t\cdot |I|} \:,
  \end{displaymath}
  which is exactly the number of invertible matrices in $\Mat_{I\times
    I}(\Res(\bL))$.
\end{corollary}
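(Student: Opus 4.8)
The plan is to deduce the formula directly from Theorem~\ref{theorem_inv_mat}, applied to the family of irreducible factors of $\bL$ indexed by the set $\bigcup_{t\in T}\{t\}\times\{1,\dots,e_t\}$, which I will abbreviate as $\widetilde T$. By that theorem, an automorphism of $\bL$ corresponds to a choice of a permutation $\sigma$ of $\widetilde T$ together with, for each index, an isomorphism between the corresponding pair of factors; and distinct such data give distinct automorphisms, since $\sigma$ and the $\varphi_t$ can be recovered from $\varphi$ (the permutation is determined by $\ker(\pi_t\circ\varphi)$, and then $\varphi_{\sigma(t)}=(\pi_t\circ\varphi)\circ\epsilon_{\sigma(t)}$). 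So $|\Aut(\bL)|$ equals the number of pairs $(\sigma,(\varphi_t))$ for which each $\varphi_t\colon\bL_t\to\bL_{\sigma^{-1}(t)}$ is an isomorphism.

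First I would observe that since the $\bL_t$, $t\in T$, are pairwise \emph{nonisomorphic} irreducible lattices, a permutation $\sigma$ of $\widetilde T$ can be completed to such a datum only if it preserves the ``type'' of each factor, i.e.\ if $\sigma$ maps each block $\{t\}\times\{1,\dots,e_t\}$ onto itself; otherwise some $\varphi_t$ would have to be an isomorphism between non-isomorphic lattices, which is impossible. (Here I use Proposition~\ref{unique_decomposition}, or simply the fact that isomorphic lattices have the same cardinality, to rule out cross-type identifications.) Hence the admissible $\sigma$ are exactly the elements of $\prod_{t\in T}\mathrm{Sym}(e_t)$, of which there are $\prod_{t\in T}e_t!$. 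Next, for any such $\sigma$, the factors $\bL_t$ and $\bL_{\sigma^{-1}(t)}$ are literally the \emph{same} lattice (both equal to $\bL_s$ where $s$ is the common first coordinate), so the set of isomorphisms $\varphi_t\colon\bL_t\to\bL_{\sigma^{-1}(t)}$ is just $\Aut(\bL_s)$; as the index $t$ ranges over the block belonging to $s\in T$, we pick up a factor of $|\Aut(\bL_s)|^{e_s}$, independently of the chosen $\sigma$. Multiplying the count of admissible permutations by this factor for each $t\in T$ gives
\begin{displaymath}
  |\Aut(\bL)| = \Big(\prod_{t\in T}e_t!\Big)\cdot\prod_{t\in T}|\Aut(\bL_t)|^{e_t}
  = \prod_{t\in T}e_t!\cdot|\Aut(\bL_t)|^{e_t},
\end{displaymath}
as claimed.

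For the ``in particular'' statement, I would apply the formula just proved to the lattice $\bL^I$ in place of $\bL$: since $\bL^I=\bigtimes_{t\in T}\bL_t^{\,e_t\cdot|I|}$ and the $\bL_t$ are still pairwise distinct irreducibles, substituting $e_t\cdot|I|$ for $e_t$ yields $|\Aut(\bL^I)|=\prod_{t\in T}(e_t\cdot|I|)!\cdot|\Aut(\bL_t)|^{e_t\cdot|I|}$. Finally, by the Corollary following Theorem~\ref{theorem_zum} (the isomorphism $\Mat_{I\times I}(\Res(\bL))\cong\Res(\bL^I)$) together with the Corollary stating that a matrix $M$ is invertible iff $\varphi_M$ is an automorphism of $\bL^I$, the number of invertible matrices in $\Mat_{I\times I}(\Res(\bL))$ equals $|\Aut(\bL^I)|$, which completes the proof.

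I do not anticipate a serious obstacle here — the work is essentially bookkeeping on top of Theorem~\ref{theorem_inv_mat}. The one point that requires a little care, and which I would state explicitly rather than gloss over, is the injectivity of the correspondence $(\sigma,(\varphi_t))\mapsto\varphi$: one must check that the data are uniquely recoverable from $\varphi$ so that we are genuinely counting, not over-counting. This is exactly what the uniqueness arguments inside the proofs of Lemma~\ref{lemma_congruences} and Theorem~\ref{theorem_inv_mat} already establish, so it suffices to invoke them.
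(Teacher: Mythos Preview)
Your proposal is correct and follows precisely the approach the paper intends: the paper gives no proof at all for this corollary, merely presenting it ``as another consequence of Theorem~\ref{theorem_inv_mat}'', and you have filled in exactly those bookkeeping details (block-preserving permutations, injectivity of the correspondence, and the substitution $e_t\mapsto e_t\cdot|I|$) in the expected way. Two minor quibbles: the isomorphism $\Mat_{I\times I}(\Res(\bL))\cong\Res(\bL^I)$ is a displayed equation in Section~2 rather than a corollary after Theorem~\ref{theorem_zum}, and your parenthetical appeal to Proposition~\ref{unique_decomposition} or to cardinality is unnecessary---the hypothesis that the $\bL_t$ are pairwise non-isomorphic already rules out cross-block isomorphisms directly.
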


\subsection{The inverse matrix}

The next proposition provides a construction for the inverse matrix of
an invertible matrix.

\begin{proposition}\label{prop_the_inverse}
  Let $T,I$ be finite index sets, $\bL_t$ an irreducible finite
  lattice for every $t\in T$, $\bL:=\bigtimes_{t\in T}\bL_t$, let
  $A=(a_{i,j})\in \Mat_{I\times I}(\Res(\bL))$ be invertible, and
  $\sigma$ and $\varphi_{t,i}$ for every $(t,i)\in T\times I$ as in
  Corollary~\ref{corollary_inv_mat}. Then for the inverse matrix
  $B=(b_{i,j})$ of $A$, the entry $b_{i,j}$ for $i,j\in I$ is of the
  form $b_{i,j}=(\check{\varphi}_{i,j,t})_{t\in T}$ with
  \begin{displaymath}
    \check{\varphi}_{i,j,t} = \begin{cases}
      \varphi_{t,i}^{-1} \quad &\text{if } \exists s\in T:
      \sigma^{-1}(t,i)=(s,j),\\
      \bar{0}_{\bL_t} &\text{otherwise}.
    \end{cases}
  \end{displaymath}
\end{proposition}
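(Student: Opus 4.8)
The plan is to work on the side of residuated mappings of the direct product $\bL^I$, using the explicit form of $\varphi_A$ supplied just after Corollary~\ref{corollary_inv_mat}, and then to read off the matrix entries of the inverse. Concretely, since $A$ is invertible, $\varphi_A = (\varphi_{\sigma(t,i)}\circ \pi_{\sigma(t,i)})_{(t,i)\in T\times I}$ is an automorphism of $\bL^I = \bigtimes_{(t,i)\in T\times I}\bL_{t,i}$, and its inverse $\varphi_A^{-1}$ corresponds (under the isomorphism $\Mat_{I\times I}(\Res(\bL))\cong\Res(\bL^I)$) to the inverse matrix $B$. So it suffices to compute $\varphi_A^{-1}$ coordinatewise and to identify $b_{i,j}=\pi_{(\cdot),i}\circ B_i$ restricted appropriately, exactly as the paragraph before the proposition describes the passage from a mapping to matrix entries.

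First I would invert $\varphi_A$ directly. The map $\varphi_A$ sends $(x_{s,k})_{(s,k)} \mapsto \big(\varphi_{\sigma(t,i)}(x_{\sigma(t,i)})\big)_{(t,i)}$; writing $(s,k):=\sigma(t,i)$, the $(t,i)$-th output coordinate depends only on the input coordinate indexed by $\sigma(t,i)$, via the isomorphism $\varphi_{\sigma(t,i)}:L_{\sigma(t,i)}\to L_{\sigma^{-1}(\sigma(t,i))}=L_{t,i}$. Hence $\varphi_A$ is precisely the "generalized permutation" of coordinates given by $\sigma$ together with the isomorphisms $\varphi_{t,i}$, and its inverse is the one given by $\sigma^{-1}$ together with the inverse isomorphisms: $\varphi_A^{-1}\big((y_{t,i})_{(t,i)}\big) = \big(\varphi_{t,k}^{-1}(y_{\sigma^{-1}(t,k)})\big)_{(t,k)}$, where for index $(t,k)$ one uses that $\sigma^{-1}(t,k)=(s,j)$ for a unique $(s,j)$, so the $(t,k)$-output reads coordinate $(s,j)$ of the input and applies $\varphi_{t,k}^{-1}:L_{t,k}\to L_{s,j}$. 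A short check that this composes to the identity on both sides (using $\sigma\sigma^{-1}=\mathrm{id}$, $\varphi_{t,i}\varphi_{t,i}^{-1}=\mathrm{id}$) confirms it is indeed $\varphi_A^{-1}$, and this is where I would be slightly careful with the bookkeeping of indices.

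Next I would translate back to matrix language. By the displayed isomorphism $\Mat_{I\times I}(\Res(\bL))\cong\Res(\bL^I)$, an entry $b_{i,j}\in\Res(\bL)=\Res(\bigtimes_{t\in T}\bL_t)$ is itself a tuple $(\check\varphi_{i,j,t})_{t\in T}$ with $\check\varphi_{i,j,t}:L_j\text{-block}\to L_t$; more precisely $\check\varphi_{i,j,t}$ is the $((t,i),(\cdot,j))$-component of the residuated map $\varphi_B=\varphi_A^{-1}$, i.e. the contribution of the $j$-th input block to the $(t,i)$-th output coordinate. From the formula for $\varphi_A^{-1}$ just derived, the $(t,i)$-th output coordinate reads exactly one input coordinate, namely the one indexed by $\sigma^{-1}(t,i)$; if $\sigma^{-1}(t,i)=(s,j)$ for some $s\in T$ then this input coordinate lies in the $j$-th block and the contributing map is $\varphi_{t,i}^{-1}$, otherwise the $j$-th block contributes nothing to this coordinate, i.e. the component is the constant map $\bar 0_{\bL_t}$. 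This is precisely the claimed case distinction for $\check\varphi_{i,j,t}$.

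The main obstacle is not any deep step but the index gymnastics: keeping straight the difference between $\sigma$ acting on $T\times I$ and the induced identification of the $(t,i)$-th output block of $\varphi_A$ with the $\sigma(t,i)$-th input block, and correctly inverting this to see that the $(t,i)$-th output of $\varphi_A^{-1}$ reads the $\sigma^{-1}(t,i)$-th input. I would therefore present the argument by first fixing the clean description of $\varphi_A$ as a coordinate permutation-with-isomorphisms, then inverting it in that language, and only at the very end unwinding the matrix-entry convention $b_{i,j}=(\check\varphi_{i,j,t})_{t\in T}$; checking $AB=BA=\id$ reduces, under the isomorphism, to $\varphi_A\circ\varphi_B=\varphi_B\circ\varphi_A=\id_{\bL^I}$, which the coordinate description makes immediate.
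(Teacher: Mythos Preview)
Your proposal is correct and follows essentially the same route as the paper: pass to the corresponding automorphism $\varphi_A=(\varphi_{\sigma(t,i)}\circ\pi_{\sigma(t,i)})_{(t,i)}$ of $\bL^I$, observe that its inverse is $\varphi_B=(\varphi_{t,i}^{-1}\circ\pi_{\sigma^{-1}(t,i)})_{(t,i)}$, and then read off the entries $b_{i,j}$ via the isomorphism $\Mat_{I\times I}(\Res(\bL))\cong\Res(\bL^I)$. The paper's proof is just a terser version of exactly this computation.
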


\begin{proof}
  As stated before, $\varphi_A = (\varphi_{\sigma(t,i)}\circ
  \pi_{\sigma(t,i)})_{(t,i)\in T\times I}$ is the corresponding
  mapping to $A$ in $\Res(\bL^I)$. The inverse of $\varphi_A$, i.e.,
  the corresponding mapping to the matrix $B$, is the mapping
  $\varphi_B=\varphi_A^{-1} = (\varphi_{t,i}^{-1}\circ
  \pi_{\sigma^{-1}(t,i)})_{(t,i)\in T\times I}$. It follows that
  $b_{i,j}$ is of the form $b_{i,j}=(\check{\varphi}_{i,j,t})_{t\in
    T}$ with $\check{\varphi}_{i,j,t}$ as given in the proposition.
\end{proof}

\subsection{Invertible matrices over subsemirings of
$\Res(\bL)$}\label{sec_subsemirings}

\begin{lemma}
  Let $\bL$ be a finite lattice, $(R,\vee,\circ)$ a subsemiring of
  $(\Res(\bL),\vee,\circ)$, and $\varphi\in R$ such that $\varphi$ is
  invertible in $(\Res(\bL),\circ)$. Then $\varphi^{-1}\in R$.
\end{lemma}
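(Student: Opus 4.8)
The plan is to exploit the fact that $\bL$ is finite, so $\varphi$ has finite (multiplicative) order in the monoid $(\Res(\bL),\circ)$, and that the subsemiring $R$ is closed under composition. First I would observe that since $\varphi\in R$ and $R$ is closed under $\circ$, all powers $\varphi^n$ for $n\ge 1$ lie in $R$. Because $\varphi$ is invertible in $(\Res(\bL),\circ)$, the cyclic semigroup it generates inside the finite monoid $(\Res(\bL),\circ)$ is actually a finite \emph{group}: invertibility rules out the pre-period, so the sequence $\varphi,\varphi^2,\varphi^3,\dots$ is purely periodic. Concretely, by finiteness there exist $m>n\ge 1$ with $\varphi^m=\varphi^n$, and cancelling $\varphi^n$ (legitimate since $\varphi$ is invertible) gives $\varphi^{m-n}=\id_L$. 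Setting $k:=m-n\ge 1$ we get $\varphi^{k}=\id_L$, hence $\varphi^{-1}=\varphi^{k-1}$.

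The second step is to handle the edge case $k=1$, where the formula would read $\varphi^{-1}=\varphi^{0}=\id_L$; this is fine precisely when $\varphi=\id_L$, which is exactly the case $k=1$. For $k\ge 2$ we have $\varphi^{k-1}\in R$ directly by closure of $R$ under $\circ$, and $\varphi^{k-1}\circ\varphi=\varphi\circ\varphi^{k-1}=\varphi^{k}=\id_L$ shows $\varphi^{k-1}=\varphi^{-1}$. In either case $\varphi^{-1}\in R$, as claimed.

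I expect the main (mild) obstacle to be purely a matter of careful bookkeeping rather than a genuine difficulty: one must make sure that the semiring $R$ is only assumed closed under $\vee$ and $\circ$ (it need not contain $\id_L$ a priori, nor be closed under any inverse operation), so the argument must produce $\varphi^{-1}$ as an honest composite $\varphi\circ\cdots\circ\varphi$ of elements of $R$. The finiteness of $\bL$ (equivalently of $\Res(\bL)$) is what makes this possible, and it is the only place finiteness is used. No appeal to the structure theory of the earlier sections is needed; this lemma is an elementary pigeonhole argument about torsion elements in a finite monoid.
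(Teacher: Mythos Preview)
Your argument is correct and is essentially the same as the paper's proof: the paper simply asserts that, since $\bL$ is finite and $\varphi$ is invertible, one has $\varphi^{-1}\in\langle\varphi\rangle\subseteq R$, where $\langle\varphi\rangle$ denotes the sub-semigroup generated by $\varphi$ under~$\circ$. You have merely unpacked this one-line justification via the pigeonhole/cancellation step and handled the trivial case $\varphi=\id_L$ explicitly.
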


\begin{proof}
  Since $\varphi$ is invertible and $\bL$ is finite, we find that
  $\varphi^{-1}\in \langle \varphi \rangle\subseteq R$, where $
  \langle \varphi \rangle$ is the span of $\varphi$ with respect to
  $\circ$.
\end{proof}

If $(R,\vee,\circ)$ is a subsemiring of $(\Res(\bL),\vee,\circ)$, then
$(\Mat_{I\times I}(R),+,\cdot)$ is also a subsemiring of
$(\Mat_{I\times I}(\Res(\bL)),+,\cdot)$. The next corollary states the
corresponding result.

\begin{corollary}
  Let $\bL$ be a finite lattice, $(R,\vee,\circ)$ a subsemiring of
  $(\Res(\bL),\vee,\circ)$, $I$~a finite index set, and $A\in
  \Mat_{I\times I}(R)$ such that $A$ is invertible in $\Mat_{I\times
    I}(\Res(\bL))$. Then $A^{-1}\in \Mat_{I\times I}(R)$.
\end{corollary}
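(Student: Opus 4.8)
The plan is to imitate the proof of the preceding lemma, but working in the multiplicative monoid of the matrix semiring rather than with a single residuated map. First I would observe that, since $\bL$ is finite, the semiring $\Res(\bL)$ is finite (it is a subset of $L^L$), and hence, with $I$ finite as well, the matrix semiring $\Mat_{I\times I}(\Res(\bL))$ is finite. In particular $(\Mat_{I\times I}(\Res(\bL)),\cdot)$ is a finite monoid, whose identity is the matrix $\id$ corresponding to $\id_{\bL^I}\in\Res(\bL^I)$ (equivalently, the diagonal matrix with $\id_L$ on the diagonal and $\mathbf{0}$ off the diagonal).

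Next, since $A$ is a unit of this finite monoid, the submonoid $\langle A\rangle$ generated by $A$ under $\cdot$ is a finite cyclic group; concretely, the powers $A,A^2,A^3,\dots$ must eventually repeat, say $A^m=A^{m+k}$ with $m\geq 1$ and $k\geq 1$, and multiplying by $A^{-m}$ gives $A^k=\id$, whence $A^{-1}=A^{k-1}\in\langle A\rangle$. On the other hand, as remarked just before the corollary, $(\Mat_{I\times I}(R),+,\cdot)$ is a subsemiring of $(\Mat_{I\times I}(\Res(\bL)),+,\cdot)$, so $\Mat_{I\times I}(R)$ is closed under $\cdot$; since $A\in\Mat_{I\times I}(R)$, every positive power of $A$ lies in $\Mat_{I\times I}(R)$. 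If $k\geq 2$ this already yields $A^{-1}=A^{k-1}\in\Mat_{I\times I}(R)$, and if $k=1$ then $A=\id$, so $A^{-1}=A\in\Mat_{I\times I}(R)$ trivially. Either way the claim follows.

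I do not expect a genuine obstacle: the only point needing a moment's care is the edge case $k=1$ (where $\id$ itself need not belong to $\Mat_{I\times I}(R)$, since $R$ need not contain $\id_L$, but then $A$ coincides with $\id$ anyway). It is worth noting that one could alternatively try to read off $A^{-1}$ from the explicit description in Proposition~\ref{prop_the_inverse}, using that each $\varphi_{t,i}^{-1}$ is again residuated and lies in $R$ by the preceding lemma; but the finite-monoid argument above is shorter and avoids unwinding that notation.
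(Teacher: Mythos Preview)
Your argument is correct and is precisely the approach the paper intends: the corollary is presented without its own proof as ``the corresponding result'' to the preceding lemma, and your transfer of the finite-monoid argument from $(\Res(\bL),\circ)$ to $(\Mat_{I\times I}(\Res(\bL)),\cdot)$ is exactly what is meant. Equivalently, one may invoke the lemma directly via the isomorphism $\Mat_{I\times I}(\Res(\bL))\cong\Res(\bL^I)$, under which $\Mat_{I\times I}(R)$ becomes a subsemiring of $\Res(\bL^I)$; either route gives the same one-line proof.
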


This means that for matrices over a subsemiring of $\Res(L)$ one can
also apply Corollary~\ref{corollary_inv_mat} to decide whether a
matrix is invertible and Proposition~\ref{prop_the_inverse} to
construct the inverse of an invertible matrix.  Consequently, one can
do this for every finite additively idempotent semiring with zero and
one by Proposition~\ref{prop_add_idemp_semirings}.  In particular,
these results apply to every proper finite simple semiring with zero
by Theorem~\ref{theorem_zum}.

\subsection{Remarks}

In the following let $(R,+,\cdot)$ be a finite additively idempotent
semiring with zero and one.  To apply
Corollary~\ref{corollary_inv_mat} and
Proposition~\ref{prop_the_inverse} for matrices over~$R$, it is
necessary to represent the semiring as a semiring of residuated
mappings of a finite lattice $\bL$.  Additionally, it is required to
know the representation of the lattice as a direct product
$\bL=\bigtimes_{t\in T} \bL_t $ of irreducible lattices $\bL_t$ and to
represent every residuated mapping (semiring element) as a mapping of
$\bigtimes_{t\in T} \bL_t$.  For example, one can represent
$(R,+,\cdot)$ as the subsemiring $(T(R),\vee,\circ)$ of
$(\Res(\bR),\vee,\circ)$, where \mbox{$\bR=(R,\leq)$} (see
Proposition~\ref{prop_add_idemp_semirings}).  Also in this case, one
has to represent $\bR$ as a direct product $\bR=\bigtimes_{t\in T}
\bR_t$ of irreducible lattices $\bR_t$, and one has to represent every
mapping in $T(R)$ as a mapping of $\bigtimes_{t\in T} \bR_t$.

If the lattice $\bL$ is irreducible, then we know by
Corollary~\ref{cor_gen_perm_mat} that a matrix is invertible iff it is
a generalized permutation matrix.  In this case, determining whether a
matrix is invertible as well as inverting is very easy.  In
particular, if the lattice $\bR$ is irreducible, then a matrix is
invertible iff it is a generalized permutation matrix. Furthermore,
the lattice $\bR$ is irreducible iff the semigroup $(R,+)$ is
irreducible. Hence, we get the following corollary.

\begin{corollary}
  Let $(R,+)$ be irreducible and $A\in \Mat_{I\times I}(R)$. Then $A$
  is invertible iff $A$ is a generalized permutation matrix.
\end{corollary}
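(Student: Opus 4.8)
The plan is to derive this from Corollary~\ref{cor_gen_perm_mat} through the representation of $R$ as residuated mappings. First I would recall from the preceding discussion that $(R,+)$ being irreducible is equivalent to the lattice $\bR=(R,\leq)$ being irreducible; as $R$ is finite, $\bR$ is then a finite irreducible lattice, so Corollary~\ref{cor_gen_perm_mat} is applicable with $\bL=\bR$. By Proposition~\ref{prop_add_idemp_semirings} the semiring $(R,+,\cdot)$ is isomorphic, via $T$, to the subsemiring $(T(R),\vee,\circ)$ of $(\Res(\bR),\vee,\circ)$, and this isomorphism induces an isomorphism of $\Mat_{I\times I}(R)$ onto the subsemiring $\Mat_{I\times I}(T(R))$ of $\Mat_{I\times I}(\Res(\bR))$ that carries the identity matrix to the identity matrix (since $T_1=\id_R$) and the zero matrix to the zero matrix (since $T_0=\mathbf{0}$).

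Next I would check that the two readings of the phrase \emph{generalized permutation matrix}, over $R$ and over $\Res(\bR)$, agree for a matrix $A$ with entries in $T(R)$. The combinatorial pattern --- exactly one nonzero entry in each row and column --- refers to the zero element, which is $0\in R$ on one side and $\mathbf{0}\in\Res(\bR)$ on the other, and these correspond under $T$; so the pattern condition is unambiguous. As for the nonzero entry, an element of $T(R)$ is invertible with respect to $\circ$ in $\Res(\bR)$ if and only if it is invertible with respect to $\cdot$ in $R$: one implication is immediate, and the other is precisely the lemma stating that an element of a subsemiring of $\Res(\bL)$ that is invertible in $\Res(\bL)$ has its inverse already in that subsemiring. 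Hence $A$ is a generalized permutation matrix over $R$ if and only if it is one over $\Res(\bR)$.

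Finally I would match the two notions of invertibility for $A\in\Mat_{I\times I}(R)$. If $A$ is invertible in $\Mat_{I\times I}(R)$ then, since $\Mat_{I\times I}(R)$ sits inside $\Mat_{I\times I}(\Res(\bR))$ as a subsemiring with the same identity matrix, $A$ is also invertible in $\Mat_{I\times I}(\Res(\bR))$; conversely, if $A$ is invertible in $\Mat_{I\times I}(\Res(\bR))$ then by the corollary of Section~\ref{sec_subsemirings} on matrices over subsemirings we have $A^{-1}\in\Mat_{I\times I}(R)$, so $A$ is invertible in $\Mat_{I\times I}(R)$. Chaining these equivalences with Corollary~\ref{cor_gen_perm_mat} applied to $\bL=\bR$ and with the previous paragraph yields that $A$ is invertible over $R$ exactly when $A$ is a generalized permutation matrix over $R$, which is the assertion.

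I do not anticipate a genuine obstacle: the statement is essentially a bookkeeping consequence of results already established. The only points deserving a line of justification are that the zero and the identity of $R$ correspond to those of $\Res(\bR)$ under the embedding $T$ --- so that the monomial-matrix pattern and the identity matrix are meaningful on both sides --- and that invertibility inside the subsemiring $T(R)$ (respectively $\Mat_{I\times I}(T(R))$) coincides with invertibility inside the ambient $\Res(\bR)$ (respectively $\Mat_{I\times I}(\Res(\bR))$), both of which are supplied by the results of Section~\ref{sec_subsemirings}.
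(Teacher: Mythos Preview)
Your proposal is correct and follows precisely the route sketched in the paper: embed $R$ into $\Res(\bR)$ via Proposition~\ref{prop_add_idemp_semirings}, use that $(R,+)$ irreducible is equivalent to $\bR$ irreducible, apply Corollary~\ref{cor_gen_perm_mat}, and transfer invertibility and the monomial-matrix property back to $R$ via the results of Section~\ref{sec_subsemirings}. If anything, you are more explicit than the paper, which simply states the corollary after the one-paragraph discussion without spelling out the compatibility of zeros, identities, and invertibility under the embedding.
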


If $\bL$ is given without the representation as a direct product of
irreducible lattices, then it may actually be involved to find such a
representation.  In particular, it can be hard to find such a
representation for $\bR$.  In the cryptographic application described
in \cite{MMR} it may be sensible for the involved parties of the
protocol (Alice and Bob) to agree in the setup phase on a random
finite simple semiring by choosing randomly a finite lattice $\bK$ and
taking $(\Res(\bK),\vee,\circ)$ (or a certain subsemiring
$(R,\vee,\circ)$ with $e_{a,b}\in R$ for all $a,b\in K$) as the
semiring. If one picks randomly a finite lattice, then the chosen
lattice is likely to be irreducible, and thus determining whether a
matrix over this semiring is invertible and computing the inverse of
an invertible matrix gets again very easy, since all invertible
matrices are in this case generalized permutation matrices.

In order to prevent that deciding whether a matrix is invertible and
computing the inverse become easy problems, a possible approach is
that the parties agree on several irreducible lattices and publish the
direct product of these lattices, without showing the representation
of this lattice as a direct product.

\end{document}